\documentclass[10pt]{article}
\textwidth = 17 cm \textheight =24 cm
 \oddsidemargin = 0 cm
\evensidemargin = 0 cm
 \topmargin = -1 cm
\parskip = 2.0 mm

%
\newtheorem{prelemmaa}{{\bf LEMMA}}

\newtheorem{prelem}{{\bf THEOREM}}

\newtheorem{preque}{{\bf QUESTION}}

\newtheorem{prelemma}{LEMMA}

\newtheorem{preproof}{{\bf PROOF.}}

\newenvironment{proof}[1]{\begin{preproof}{\rm
               #1}\hfill{\rule[-0.5mm]{2mm}{2mm}}}{\end{preproof}}

\newtheorem{preproposition}{{PROPOSITION}}

\newtheorem{preremark}{REMARK}

\newtheorem{precorollary}{{COROLLARY}}

\newtheorem{precorn}{{COROLLARY}}

\newtheorem{predefinition}{DEFINITION}

\newtheorem{preexample}{EXAMPLE}

\newtheorem{preconjecture}{{CONJECTURE}}


%
\newtheorem{pretheo}{{\bf THEOREM}}

%

%
%

\def\newpic#1{}
\date{}
\begin{document}
\title{\bf The Intersection  problem for $2$-$(v,5,1)$ directed block designs}
\author{
{\sc Nasrin Soltankhah}\footnote{Corresponding author: E-mail: soltan@alzahra.ac.ir
soltankhah.n@gmail.com} \sc  AND somaye ahmadi
 \\ [5mm]
Department of Mathematics\\ Alzahra University  \\
Vanak Square 19834 \ Tehran, I.R. Iran }
%
\maketitle
\begin{abstract}

The intersection problem for a pair of $2$-$(v,3,1)$ directed
designs and $2$-$(v,4,1)$ directed designs is solved by Fu in $1983$ and by Mahmoodian and Soltankhah  in
$1996$, respectively. In this paper we determine  the intersection
problem for $2$-$(v,5,1)$ directed designs.
 \end{abstract}
%
\hspace*{-2.7mm} {\bf KEYWORDS:} {  \sf Directed
designs,~Intersection of directed designs }
%
\section{Introduction} 
\setcounter{theorem}{0}\setcounter{preproposition}{0}
\setcounter{precorollary}{0} \setcounter{prelemma}{0}
\setcounter{preexample}{0}
A  $t$-$(v,k,\lambda)$ directed design (or simply a
$t$-$(v,k,\lambda)DD$) is a pair$(V,\beta)$, where $V$ is a $v$-set,
and $\beta$ is a collection of ordered $k-$tuples of distinct
elements of $V$ (blocks), such that each ordered $t$-tuple of
distinct elements of $V$ appears in precisely $\lambda$ blocks.
We say that a $t$-tuple appears in a
$\it k$-tuple, if its components appear in that $k$-tuple as a set,
and they appear with the same order.
 For example, the 5-tuple $(0, 1, 4, 14, 16)$ contains the ordered pairs (0,1),
(0,4), (0,14), (0,16), (1,4), (1,14), (1,16), (4,14), (4,16), and (14,16).

The problem of determining the possible numbers of common blocks
between two designs with the same parameters is studied
extensively. Kramer and Mesner \cite{KM} asked the following: for
what values $s$ do there exist two Steiner systems $S(t,k,v)$
intersecting in $s$ blocks?  The spectrum of possible
intersection sizes for ordinary designs $S(2,3,v)$ and $S(2,4,v)$
was  settled by Lindner and Rosa $\cite{CA}$ and by Colbourn,
Hoffman and Lindner $\cite{CJC}$ respectively. H. L. Fu \cite{Fu}
discussed the intersection numbers of $S(3,4,v)$ for $v\equiv4~or~ 8(\bmod ~12)$, and
Hartman and Yehudai $\cite{HY}$ completed the determination of the spectrum of possible
intersection sizes for Steiner quadruple systems of all admissible orders $v$ except possibly $v=14,16$.
Lindner and Wallis $\cite{CW}$ and independently H. L. Fu
$\cite{HLF}$ settled the spectrum of possible intersection sizes
for a pair of  $2$-$(v,3,1)$DDs (transitive triple systems) for all admissible
$v$. The intersection problem  for a pair of $2$-$(v,4,1)$DDs and $3$-$(v,4,1)$DDs were
solved by Mahmoodian and Soltankhah $\cite{MS,MN}$.
 In this paper, we solve the intersection problem for
$2$-$(v,5,1)$DDs. The existence problem of $2$-$(v,5,\lambda)$DDs
has been solved in $\cite{DJW}$.  The necessary and sufficient
condition for the  existence of a $2-(v,5,1)DD$ is $v\equiv1~or~
5~(\bmod~ 10)$ with one exception that $2$-$(15,5,1)$DD does not
exist.

The number of blocks in a $2-(v,5,1)DD$ is equal to $ b_v  =
\frac{{v(v - 1)}}{{10}}$. Let $J_D (v) = \{ 0,1,...,b_v  - 2,b_v
\}$, and let  $I_D(v)$ denote  the set of all possible integers
$m$, such that there exist two $2-(v,5,1)DDs$ with exactly $m$
common blocks.~It is clear that $I_D(v)\subseteq J_D(v)$.

The notation is similar to that used in $\cite{hand}$. Let $K = \{
k_1 ,...,k_l \}$ be a set of positive integers. A pairwise balanced design
($~PBD(v,K,\lambda)$ ~or $(K,\lambda)PBD~$) of order $v$ with the
block sizes from $K$ is  a pair $(V,\beta)$, where $V$ is a
finite set of size $v$,  and $\beta$ is a family of subsets
(blocks) of $V$,   such that $(1)$ if $b\in \beta$  then $\left|
b \right| \in K$,   and $(2)$ every pair of distinct elements
of $V$ occurs in exactly $\lambda$ blocks of $\beta$. The
notations  $PBD(v,K)$ and $K-PBD$ of order $v$ are often used
when $\lambda=1$.

Let $K$ and $G$ be the  sets of positive integers, and let
$\lambda$ be a positive integer. A group divisible design of
index $\lambda$ and order $v$ $(~(K,\lambda)-GDD~)$ is a triple
$(V,G,\beta)$, where $V$ is a finite set of cardinality $v$, $G$
is a partition of $V$ into subsets (groups) whose size lie in $G$,
and $\beta$ is a family of subsets (blocks) of $V$ that satisfy
$(1)$ if $b \in \beta$ then $\left| b \right| \in K$, and  $(2)$
every pair of distinct elements of $V$ occurs in exactly $\lambda$
blocks or one group, but not both. If $v=a_1g_1+a_2g_2+...+a_sg_s$ and if there are $a_i$ groups
of size $g_i$, $i=1,2,...,s$ then the $(K,\lambda)-GDD$ is of the
type $g_1^{{a_1}}g_2^{{a_2}}...g_s^{{a_s}}$, or is of the type $M$, where $M=\{g_1,\cdots,g_s\}$.

A directed group divisible design $(~(K,\lambda)-DGDD~)$ is a
group divisible design $GDD$ in which every block is ordered and
each ordered pair formed from  distinct elements of different groups  occurs in exactly
$\lambda$ blocks.

In this paper, we extensively use the concept of ``trade"  defined
as follows.
 A $T(t,k,v)$ directed trade of volume $s$
consists of two disjoint collections
 $T_1$ and $T_2$, each of $s$ blocks,
  such that each ordered $t$-tuple occurs in the same number of blocks $T_1$
   as of $T_2$. It is usually denoted by $T=(T_1,T_2)$.

 Let $D=(V,\beta)$ be a  directed design  and $T=(T_1,T_2)$  be a
$T(v,k,\lambda)$ directed trade of volume $s$. If $T_1 \subseteq
\beta$, we say that $D$ contains the directed trade $T$, and if we
replace $T_2$ with $T_1$, then we  obtain a new design $D_1=(D \setminus T_1)\cup T_2$ which is denoted by $D_1=D+T$ with
the same parameters of $D$, and $\left| {D_1 \cap D } \right| =b_v-s$. This method of ``trade off" are  used frequently in this
paper.
\section{Some small cases}
\setcounter{theorem}{0}\setcounter{preproposition}{0}
\setcounter{precorollary}{0} \setcounter{prelemma}{0}
\setcounter{preexample}{0}
In this section we obtain  the intersection  size  of
$2$-$(v,5,1)$DDs, for $v=5,11,21,25$ that will be applied in proof
of  the  Theorems \ref{10},\ref{11} and \ref{13}.
\begin{prelemma}\label{8}
$I_D(5)=J_D(5)$.
\end{prelemma}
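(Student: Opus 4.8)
The plan is to first note that for $v=5$ we have $b_5=\frac{5\cdot 4}{10}=2$, so a $2$-$(5,5,1)$DD has only two blocks and $J_D(5)=\{0,2\}$ (the value $b_5-1=1$ being excluded by the definition of $J_D$). Since $I_D(5)\subseteq J_D(5)$ always holds, it suffices to exhibit, for each $m\in\{0,2\}$, a pair of $2$-$(5,5,1)$DDs meeting in exactly $m$ blocks.

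The key structural observation I would establish is that every $2$-$(5,5,1)$DD is completely rigid. Because $k=v=5$, each block is an ordered $5$-tuple using all of $V$, i.e.\ a permutation of $V$, and it carries exactly $10$ ordered pairs, namely those $(a,b)$ with $a$ preceding $b$ in the tuple. With $b_5=2$ blocks we cover $2\cdot 10=20$ ordered pairs, which is precisely the total number of ordered pairs of distinct elements of $V$; hence the two blocks partition all ordered pairs between them. Consequently, if one block is $P=(p_1,p_2,p_3,p_4,p_5)$, the second block must carry exactly the reverse pairs $(p_j,p_i)$ with $i<j$. Since a permutation is determined by its pairwise comparisons, the second block is forced to be the reversal $P^{r}=(p_5,p_4,p_3,p_2,p_1)$. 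Thus every $2$-$(5,5,1)$DD has the form $\{P,P^{r}\}$ for some permutation $P$ of $V$.

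The two required values are then immediate. For $m=2$, any design meets itself in both of its blocks. For $m=0$, I would exhibit two designs with no common block; for instance, taking $V=\{0,1,2,3,4\}$, the pairs $D_1=\{(0,1,2,3,4),(4,3,2,1,0)\}$ and $D_2=\{(0,1,2,4,3),(3,4,2,1,0)\}$ are both valid $2$-$(5,5,1)$DDs by the characterization above and are visibly disjoint, so $0\in I_D(5)$. Equivalently, one may view $(D_1,D_2)$ as a $T(2,5,1)$ directed trade of volume $b_5=2$ and apply the trade-off mechanism described in the Introduction.

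I do not expect a genuine obstacle in this case; the only point requiring care is the rigidity argument. As a byproduct it explains transparently why $m=1$ cannot occur: a $2$-$(5,5,1)$DD is determined by either one of its two blocks, so two such designs sharing a single block would necessarily coincide and thus share both. This is consistent with $1\notin J_D(5)$.
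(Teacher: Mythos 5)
Your proof is correct and takes essentially the same route as the paper, which simply exhibits $D_1=\{(0,1,2,3,4),(4,3,2,1,0)\}$ and $D_2=\{(1,0,2,3,4),(4,3,2,0,1)\}$ and records $\left| D_1\cap D_1\right|=2$, $\left| D_1\cap D_2\right|=0$. Your extra rigidity observation (every $2$-$(5,5,1)$DD has the form $\{P,P^{r}\}$) does not appear in the paper, but it only serves to make the verification of the examples transparent and to explain why the value $1$ is absent, so it is an embellishment rather than a different approach.
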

\begin{proof}
{Let $D_1: (0,1,2,3,4)~(4,3,2,1,0) $ and $ D_2:
(1,0,2,3,4)~(4,3,2,0,1)$ be two $2$-$(5,5,1)$DDs on the set
$\{0,1,2,3,4\}$. We have $ \left| {D_1 \cap D_1 } \right| =
2~,~\left| {D_1 \cap D_2 } \right| = 0$, so this results in
$I_D(5)=J_D(5)$.}
\end{proof}
\begin{prelemma}
$\{ 0,1,2,3,11\}   \subseteq I_D (11)$.
\end{prelemma}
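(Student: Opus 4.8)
Since $b_{11}=\frac{11\cdot 10}{10}=11$, every $2$-$(11,5,1)$DD has exactly $11$ blocks, and the claim amounts to producing, for each $m\in\{0,1,2,3\}$, a pair of $2$-$(11,5,1)$DDs meeting in exactly $m$ blocks; the value $m=11=b_{11}$ is immediate, since every design meets itself in all of its blocks. As a common anchor I would fix the cyclic design $D=\{B+i:\ i\in Z_{11}\}$ on $Z_{11}=\{0,1,\dots,10\}$ (arithmetic mod $11$) generated by the base block $B=(0,8,4,2,3)$. The ten forward ordered differences of $B$ are $8,4,2,3,7,5,6,9,10,1$, which exhaust $Z_{11}\setminus\{0\}$; hence developing $B$ cyclically covers every ordered pair exactly once, so $D$ is a $2$-$(11,5,1)$DD.

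For $m=0$ I would use block reversal. Writing $\mathrm{rev}(x_1,\dots,x_5)=(x_5,x_4,x_3,x_2,x_1)$, set $D'=\{\mathrm{rev}(C):\ C\in D\}$. Reversing a block turns each forward ordered pair it covers into the corresponding backward pair, and since $(x,y)\mapsto(y,x)$ permutes the set of ordered pairs, $D'$ again covers every ordered pair exactly once and is a $2$-$(11,5,1)$DD. The $j$-th block of $D'$ is $(3+j,2+j,4+j,8+j,j)$; equating it with $B+i=(i,8+i,4+i,2+i,3+i)$ gives $i=3+j$ from the first coordinate and $i=j$ from the third, forcing $3\equiv 0\pmod{11}$, which is false. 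Thus $D\cap D'=\emptyset$ and $0\in I_D(11)$.

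For $m\in\{1,2,3\}$ I would invoke the trade-off technique of Section~1: it suffices to build directed trades $T=(T_1,T_2)$ of volumes $10,9,8$ with $T_1\subseteq D$, since then $D$ and $D+T$ are $2$-$(11,5,1)$DDs with $\left|D\cap(D+T)\right|=11-s\in\{1,2,3\}$, where $s$ is the volume. Each such trade is obtained by choosing a subcollection $T_1$ of blocks of $D$, collecting the ordered pairs they cover, and repartitioning exactly those pairs into the same number of new ordered $5$-tuples $T_2$, none of which is already a block of $D$.

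The main obstacle is constructing these three trades with their exact prescribed volumes. The volume-$8$ trade (intersection $3$) is the most flexible; the volumes $9$ and $10$ leave only two blocks, respectively one block, of $D$ fixed, so one must re-cover almost all of the released ordered pairs with fresh $5$-tuples while avoiding every old block. Equivalently, I must exhibit $2$-$(11,5,1)$DDs that share exactly two, respectively exactly one, prescribed block with $D$. As $V$ and the number of blocks are small, this reduces to writing down explicit candidate blocks and checking the directed-trade condition by hand; the existence of the volume-$11$ trade (the disjoint mate $D'$) shows there is ample rearrangement freedom, but realizing precisely the volumes $8,9,10$ is where the real combinatorial work lies.
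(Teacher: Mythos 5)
Your anchor design is fine: the base block $(0,8,4,2,3)$ does have ordered difference set $\{8,4,2,3,7,5,6,9,10,1\}=Z_{11}\setminus\{0\}$, so its cyclic development is a $2$-$(11,5,1)$DD, and your reversal argument correctly yields a disjoint mate, giving $0\in I_D(11)$ (this mirrors the paper, whose second design is exactly the reversed base block developed mod $11$). The value $11$ is trivial, as you say.

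However, there is a genuine gap: the cases $m=1,2,3$ are the substantive content of the lemma, and you do not actually produce them. You reduce them to the existence of directed trades of volumes $10$, $9$, $8$ inside $D$ and then explicitly defer "the real combinatorial work" of constructing those trades; nothing in your argument guarantees they exist, and trades of volume $10$ or $9$ in an $11$-block design are exactly the hard objects to exhibit. The paper closes this gap with a much cheaper device that you could have used: apply a permutation $\alpha$ of the point set to every block of the design. The image $D\alpha$ is automatically another $2$-$(11,5,1)$DD, so one only has to count the blocks fixed as ordered tuples; the paper checks that the transpositions and $3$-cycle $(3\,9)(5\,4)$, $(0\,7\,8)$, $(4\,5)$ (applied to its designs $D_1$, $D_2$) give intersections $1$, $2$, $3$ respectively. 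Until you either adopt such a verifiable construction or write down explicit trades of volumes $8$, $9$, $10$ and check the directed-trade condition, the inclusion $\{1,2,3\}\subseteq I_D(11)$ remains unproved.
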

\begin{proof}
{
Let $D_1$ be a $2$-$(11,5,1)$DD with the base block $(3,5,1,4,9)~(\mathrm{mod}
~11)$, and $D_2$ with the base block $(9,4,1,5,3)~(\mathrm{mod}~11)$ on the
set $\{0,1,...,9,10\}$, we have ${\left| {D_1  \cap D_1 } \right| =
11},~{\left| {D_1  \cap D_2 } \right| = 0}$.  Let $\alpha$ denotes
a permutation on the same set. For the following permutations on
the  elements of each block of $D_1$ and $D_2$ we have:
\begin{center}
$ \small
\begin{array}{cllllll}
  \alpha _1  =  & (39)(54)& & & \left| {D_2  \cap D_1 \alpha _1 } \right| = 1; & &\\
  & & & & & & \\
  \alpha _2  =  &(078) & & & \left| {D_1  \cap D_1 \alpha _2 } \right| = 2; & &\\
  & & & & & &\\
  \alpha _3  =  & (45)& & & \left| {D_1  \cap D_1 \alpha _3 } \right| = 3. &&
\end{array}$
\end{center}
This  results  in $\{ 0,1,2,3,11\}   \subseteq I_D (11)$.
}
\end{proof}
\begin{prelemma}
$I_D(21)=J_D(21)$.
\end{prelemma}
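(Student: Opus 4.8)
The plan is to pin down $J_D(21)$ first and then realize every value in it. Since $b_{21}=21\cdot 20/10=42$, we have $J_D(21)=\{0,1,\dots,40,42\}$; note that $41=b_{21}-1$ is deliberately omitted. The inclusion $I_D(21)\subseteq J_D(21)$ is the easy half, and it holds for the same structural reason it holds for every admissible $v$: a single ordered $5$-tuple is recovered uniquely from the ten ordered pairs it covers, so there is no directed trade of volume $1$. Consequently two $2$-$(21,5,1)$DDs can never differ in exactly one block, and $41\notin I_D(21)$. It therefore remains to prove $J_D(21)\subseteq I_D(21)$, i.e. to exhibit, for each $m\in\{0,1,\dots,40\}$, a pair of designs meeting in exactly $m$ blocks, the value $m=42$ being witnessed by any design against itself.

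For the constructive half I would fix one convenient base design $D$ and move through $J_D(21)$ by trading. The key observation is that $\left|D_1\cap(D_1+T)\right|=b_{21}-s$ whenever $T$ is a directed trade of volume $s$ contained in $D_1$, so it suffices to produce, inside $D$, directed trades of every volume $s\in\{2,3,\dots,42\}$; this yields exactly the intersection numbers $42-s\in\{0,1,\dots,40\}$ and, together with $m=42$, is precisely $J_D(21)$ (and nothing more, since $s=1$ is impossible). The building blocks are two elementary trades. The first is the volume-$2$ directed trade underlying Lemma \ref{8}: on any five points $\{a,b,c,d,e\}$ the collections $T_1=\{(a,b,c,d,e),(e,d,c,b,a)\}$ and $T_2=\{(b,a,c,d,e),(e,d,c,a,b)\}$ cover the same twenty ordered pairs, so a reverse pair of blocks sitting in $D$ can always be switched at a cost of two blocks. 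The second is an explicit volume-$3$ directed trade, which I would write down on a small point set and embed in $D$; its role is purely to repair parity.

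Because every integer $s\ge 2$ can be written as $2a+3b$ with $a,b\ge 0$, concatenating $a$ disjoint volume-$2$ trades with $b$ disjoint volume-$3$ trades realizes a trade of volume $s$, provided their supports are pairwise disjoint and fit inside the $42$ blocks of $D$. I would therefore take $D$ to be a cyclic (mod $21$) design assembled from reverse-pair base blocks, so that $D$ is closed under reversal and splits into $21$ mutually disjoint reverse pairs. Applying the Lemma \ref{8} switch to any chosen $a$ of these pairs, together with one embedded volume-$3$ trade on untouched blocks whenever an odd volume is needed, then produces trades of all required volumes. In particular, switching all $21$ reverse pairs simultaneously gives a design disjoint from $D$, settling $m=0$ (the case $s=42$).

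The main obstacle is the bookkeeping of supports: the extreme small values of $m$ correspond to trades of volume close to $42$, which must occupy almost every block of $D$ at once while remaining pairwise disjoint and genuinely disjoint from the blocks they replace. This forces the careful choice of $D$ indicated above, and a verification that the inserted volume-$3$ trade can be placed on blocks left free by the selected volume-$2$ switches. Once $D$ and this disjoint family of trades are written out explicitly, each $m\in J_D(21)$ is obtained by switching the appropriate sub-collection, and the two inclusions together give $I_D(21)=J_D(21)$.
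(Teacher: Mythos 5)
Your overall skeleton --- a cyclic design carrying many pairwise disjoint volume-$2$ trades, plus one volume-$3$ trade to repair parity --- is exactly the paper's strategy, and your easy half ($41\notin I_D(21)$) and your even-intersection half are fine. But your specific choice of base design makes the parity step impossible, and this is a structural obstruction, not bookkeeping. A cyclic $2$-$(21,5,1)$DD built from reverse-pair base blocks forces the underlying $5$-set to be a planar difference set $\pmod{21}$, so the underlying unordered design is $\mathrm{PG}(2,4)$ with every line appearing once in each orientation. Now suppose $T=(T_1,T_2)$ were a directed trade of volume $3$ with $T_1\subseteq D$. Every block of $T_2$ is an ordered $5$-tuple all ten of whose unordered pairs must be covered by $T_1$, hence must lie inside the (at most three) lines met by $T_1$; since two distinct lines of $\mathrm{PG}(2,4)$ share exactly one point, a $5$-set that is not itself a line induces a linear space on $5$ points, which by de Bruijn--Erd\H{o}s requires at least $5$ lines. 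So every block of $T_2$ has as its point set one of the lines used by $T_1$. Counting ordered pairs inside each line, $T_1$ and $T_2$ must use each line equally often; since $3$ is odd, some line carries exactly one block of $T_1$, and then the unique block of $T_2$ on that line covers the same ten ordered pairs, hence is the identical tuple (a tuple is recoverable from its ordered pairs), contradicting $T_1\cap T_2=\emptyset$. So your $D$ contains no volume-$3$ trade at all; in particular $39\in J_D(21)$ can never be realized with your $D$ as one of the two designs, because intersection $39$ is equivalent to a volume-$3$ trade inside $D$. The same argument shows every trade of volume at most $4$ in your $D$ is a union of complete reverse pairs, so the moves available to your scheme (reverse-pair switches, plus a repair trade that does not exist) reach only the even half of the spectrum. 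The reversal-closure you chose precisely to simplify the bookkeeping is what destroys all odd intersection numbers.

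The paper avoids this trap: its design has base blocks $(0,1,6,8,18)$ and $(1,0,16,14,4)$ $\pmod{21}$, which are not reverses of each other (they are two different planar difference sets, oriented so that their ordered differences are complementary). The $21$ disjoint volume-$2$ trades swap the adjacent pair $(i,1+i)$, $(1+i,i)$ in the two blocks of each translate, and a genuine volume-$3$ trade exists, namely replacing $(0,1,6,8,18)$, $(1,0,16,14,4)$, $(8,9,14,16,5)$ by $(1,0,6,8,18)$, $(0,1,14,16,4)$, $(8,9,16,14,5)$: the chain works because the ordered pairs $(16,14)$ and $(14,16)$ sit adjacently in two blocks that are \emph{not} reverses of one another --- exactly the configuration your design forbids. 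If you rebuild your argument on such a design (any design admitting a volume-$3$ trade disjoint from enough of the volume-$2$ trades), your counting scheme goes through and yields $I_D(21)=J_D(21)$.
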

\begin{proof}
{
Let $D$ be a  $2$-$(21,5,1)$DD on the set $\{0,1,..,20\}$, with the
two  base blocks  $(0,1,6,8,18)$ and $(1,0,16,14,4)$, $(\mathrm{mod}~21)$. In
design $D$,  there exist  $21$ disjoint directed trades  of volume
$2$ and  at least a directed trade of volume $3$:
\begin{center}
$ \small
\begin{array}{cc}
   & T'_i:(i,1+i,6+i,8+i,18+i)~(1+i,i,16+i,14+i,4+i) \\
  T_i & \\
  &T''_i:(1+i,i,6+i,8+i,18+i)~(i,1+i,16+i,14+i,4+i) \\
   & 0 \le i \le 20
\end{array}
$
\end{center}
\begin{center}
$ \small
\begin{array}{ccccc}
  &  R' :(0,1,6,8,18)~(1,0,16,14,4)~(8,9,14,16,5)  \\
  R&\\
    &  R'' :(1,0,6,8,18)~(0,1,14,16,4)~(8,9,16,14,5)
\end{array}
$
\end{center}
Let $D_1=D+R$,~so we have:
\begin{center}
$ \small
\begin{array}{rlccc}
   \left| {D \cap D_1 } \right| =  & 39;& &&\\
   & & & &\\
   \left| {(D + \sum\limits_{i = 0}^I {T_i } ) \cap D} \right| =  &
    42 - (2(I+1))& &0 \le I \le 20;&  \\
   & & & & \\
   \left| {(D_1  + \sum\limits_{i = 1 }^{I} {T_i } ) \cap D} \right| =  &
   42 - (2I +3) & & 1 \le I \le 7;&  \\
   & & & & \\
   \left| {(D_1  + \sum\limits_{i = 1 }^I {T_i } ) \cap D} \right| =  &
   42 - (2I +1) & & 9 \le I \le 20.&
\end{array}
$
\end{center}
This results in  $I_D(21)=J_D(21)$.
}
\end{proof}
\begin{prelemma}
$I_D(25)=J_D(25)$.
\end{prelemma}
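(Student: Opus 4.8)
The plan is to establish $I_D(25)=J_D(25)$, where $J_D(25)=\{0,1,\dots,58,60\}$ since $b_{25}=\frac{25\cdot 24}{10}=60$. Following the template established in the preceding lemmas for $v=21$, I would start by fixing a single ``master'' design $D$ built cyclically from a small set of base blocks modulo $25$. The standard difference-method construction for a $2$-$(25,5,1)$DD uses base blocks whose directed differences cover each nonzero residue mod $25$ exactly once; I would select base blocks so that $D$ is generated as a cyclic design on $\mathbb{Z}_{25}$, giving $60$ blocks in total. The goal is then to realize every value in $J_D(25)$ as $|D'\cap D|$ for a suitable second design $D'$ obtained from $D$ by applying directed trades.

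The key engine is the ``trade off'' mechanism described in the introduction: if $D$ contains the first half $T_1$ of a directed trade $T=(T_1,T_2)$ of volume $s$, then $D+T$ agrees with $D$ in exactly $b_v-s=60-s$ blocks. So the heart of the argument is to exhibit a family of mutually disjoint volume-$2$ directed trades inside $D$, together with at least one volume-$3$ trade, exactly paralleling the $T_i$ and $R$ constructions in the $v=21$ lemma. Concretely, I would display a base volume-$2$ trade on two blocks of $D$ and translate it by all $25$ residues mod $25$ to get $25$ pairwise disjoint volume-$2$ trades $T_0,\dots,T_{24}$; superposing the first $I+1$ of these yields $|(D+\sum_{i=0}^{I}T_i)\cap D|=60-2(I+1)$, sweeping out all even deficits. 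To reach the odd deficits I would introduce one explicit volume-$3$ trade $R$ inside $D$ (replacing three blocks by three others with the same directed pair content), set $D_1=D+R$, and then superpose the $T_i$'s on top of $D_1$, being careful that the blocks touched by $R$ are avoided by the chosen $T_i$'s; this gives deficits $60-(2I+3)$ over an appropriate range of $I$, filling the odd values.

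I expect the main obstacle to be the bookkeeping that guarantees full coverage of $J_D(25)$ without gaps or collisions. Two points need care. First, since $b_v-1=59$ is deliberately excluded from $J_D(25)$ (two designs cannot differ in exactly one block), I must ensure the trade superpositions never produce a deficit of exactly $1$; the even family alone never hits an odd deficit, and the odd family must be arranged (as in the $v=21$ case, where the ranges $1\le I\le 7$ and $9\le I\le 20$ skip the value forcing deficit $1$) to skip whatever combination would give $59$. Second, when combining $R$ with the $T_i$'s, the trades must remain \emph{disjoint} in the blocks they modify, so that the volumes add cleanly; this forces me to verify that the three blocks altered by $R$ do not coincide with any block altered by the $T_i$ I include, which dictates exactly which index ranges for $I$ are admissible and hence which odd deficits are covered by each range. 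Once a concrete $R$ and a concrete base trade are written down, the verification of these disjointness and coverage conditions is routine arithmetic mod $25$, but assembling the ranges so that their union is precisely $\{0,1,\dots,58,60\}$ is the delicate combinatorial step. Having covered all of $J_D(25)$, together with $I_D(25)\subseteq J_D(25)$, I conclude $I_D(25)=J_D(25)$.
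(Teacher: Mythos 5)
Your high-level strategy (a master design $D$, a family of pairwise disjoint volume-$2$ directed trades to sweep the even intersection values, plus one volume-$3$ trade to shift into the odd values) is exactly the paper's strategy for $v=25$. But the concrete plan you sketch has two genuine defects. First, the proposed construction of $D$ as a cyclic design on $\mathbb{Z}_{25}$ developed modulo $25$ cannot work: an ordered $5$-tuple always has a full orbit of length $25$ under translation (a short orbit would force $x+5=x$), so the block count of any such design is a multiple of $25$, whereas $b_{25}=60$. The paper instead builds $D$ on $\mathbb{Z}_5\times\mathbb{Z}_5$ from $12$ base blocks, developing one coordinate modulo $5$, so that every orbit has length $5$ and $12\cdot 5=60$.

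Second, and independently of the construction, $25$ disjoint volume-$2$ trades touch only $50$ of the $60$ blocks, so the even chain $\left|(D+\sum_{i=0}^{I}T_i)\cap D\right|=60-2(I+1)$ bottoms out at intersection $10$; you never reach $0,2,4,6,8$, and the odd chain is similarly truncated. To cover all of $J_D(25)$ you need the block set of $D$ to be partitioned into $30$ disjoint volume-$2$ trades, which is precisely what the paper exhibits: six families of five trades each ($T_1,\dots,T_{30}$), giving $60-2I$ for $1\le I\le 30$, together with a volume-$3$ trade $L$ yielding $57$ and then $60-(2(I-1)+3)$ for $2\le I\le 29$, i.e., all odd values down to $1$. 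Your remarks about avoiding a deficit of exactly $1$ and about disjointness of $R$ from the chosen $T_i$ are the right cautions, but as stated your trade supply is quantitatively insufficient and your underlying design does not exist, so the argument does not close.
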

\begin{proof}
{
Let $D$ be $2$-$(25,5,1)$DD, on the set
$\mathrm{V}=\mathrm{Z_5}\times\mathrm{Z_5}$ that is obtained by  developing  the second coordinate of  below base blocks $(\mathrm{mod} ~5)$ except the first two blocks to which only the first coordinate should be expanded.
\begin{center}$$
\begin{array}{cc}
    {\small{((0,0),(0,1),(0,2),(0,3),(0,4))}} &  {\small{((0,4),(0,3),(0,2),(0,1),(0,0))}}    \\
                                          &                                           \\
   {\small{((0,0),(1,1),(2,4),(3,4),(4,1))}} & {\small{((4,0),(3,1),(2,3),(1,1),(0,0))}}      \\
  {\small{((0,3),(1,2),(2,3),(3,1),(4,1))}}   & {\small{((4,0),(3,3),(2,2),(1,2),(0,3))}}     \\
   {\small{((0,4),(1,1),(2,0),(3,1),(4,4))}}   & {\small{((4,3),(3,3),(2,4),(1,1),(0,4))}}    \\
  {\small{((0,4),(1,4),(2,1),(3,0),(4,1))}}   & {\small{((4,0),(3,2),(2,0),(1,4),(0,4))}}     \\
   {\small{((0,3),(1,1),(2,1),(3,3),(4,2))}}   &  {\small{((4,1),(3,0),(2,0),(1,1),(0,3))}}
\end{array}$$
\end{center}
\vspace{.2cm}
  Now we list some small $T(2,5,25)$ directed trades:
$$
\begin{array}{ccc}
  &T'_i :{\small{((i,0),(i,1),(i,2),(i,3),(i,4))((i,4),(i,3),(i,2),(i,1),(i,0))}}& \\
  T_i &  &  \\
    & T''_i:{\small{((i,0),(i,1),(i,2),(i,4),(i,3))((i,3),(i,4),(i,2),(i,1),(i,0))}}& \\
   & 1 \leq i \leq 5; &
\end{array}
$$
$$
\begin{array}{ccc}
  &T'_{5+i} :{\small{((0,i),(1,1+i),(2,4+i),(3,4+i),(4,1+i))((4,i),(3,1+i),(2,3+i),(1,1+i),(0,i))}}& \\
  T_{5+i} &  &  \\
    & T''_{5+i}:{\small{((1,1+i),(0,i),(2,4+i),(3,4+i),(4,1+i))((4,i),(3,1+i),(2,3+i),(0,i),(1,1+i))}}& \\
   & 1 \leq i \leq 5; &
\end{array}
$$
$$
\begin{array}{ccc}
  &T'_{10+i} :{\small{((0,3+i),(1,2+i),(2,3+i),(3,1+i),(4,1+i))((4,0+i),(3,3+i),(2,2+i),(1,2+i),(0,3+i))}}  & \\
  T_{10+i} &  &  \\
    & T''_{10+i}:{\small{((1,2+i),(0,3+i),(2,3+i),(3,1+i),(4,1+i))((4,i),(3,3+i),(2,2+i),(0,3+i),(1,2+i))}}  & \\
   & 1 \leq i \leq 5; &
\end{array}
$$
$$
\begin{array}{ccc}
  &T'_{15+i} :{\small{((0,4+i),(1,1+i),(2,i),(3,1+i),(4,4+i))((4,3+i),(3,3+i),(2,4+i),(1,1+i),(0,4+i))}} & \\
  T_{15+i} &  &  \\
    & T''_{15+i}:{\small{((1,1+i),(0,4+i),(2,i),(3,1+i),(4,4+i))((4,3+i),(3,3+i),(2,4+i),(0,4+i),(1,1+i))}} & \\
   & 1 \leq i \leq 5; &
\end{array}
$$
$$
\begin{array}{ccc}
  &T'_{20+i} :{\small{((0,4+i),(1,4+i),(2,1+i),(3,i),(4,1+i))((4,i),(3,2+i),(2,i),(1,4+i),(0,4+i))}} & \\
  T_{20+i} &  &  \\
    & T''_{20+i}:{\small{((1,4+i),(0,4+i),(2,1+i),(3,i),(4,1+i))((4,i),(3,2+i),(2,i),(0,4+i),(1,4+i))}} & \\
   & 1 \leq i \leq 5; &
\end{array}
$$
$$
\begin{array}{ccc}
  &T'_{25+i} : {\small{((0,1+i),(1,4+i),(2,4+i),(3,1+i),(4,i))((4,4+i),(3,3+i),(2,3+i),(1,4+i),(0,1+i))}}    & \\
 T_{25+i} &  &  \\
    &T''_{25+i}:{\small{((1,4+i),(0,1+i),(2,4+i),(3,1+i),(4,i))((4,4+i),(3,3+i),(2,3+i),(0,1+i),(1,4+i))}}  & \\
   & 1 \leq i \leq 5. &
\end{array}
$$
$$
\begin{array}{ccc}
  &L' : {\small{((0,0),(1,1),(2,4),(3,4),(4,1))((4,0),(3,1),(2,3),(1,1),(0,0))}}  &    \\
   & {\small{((4,0),(3,4),(2,4),(1,0),(0,2))}}  &  \\
  L &  &  \\
    & L:{\small{((1,1),(0,0),(3,4),(2,4),(4,1))((4,0),(3,1),(2,3),(0,0),(1,1))}}       & \\
   &{\small{((4,0),(2,4),(3,4),(1,0),(0,2))}}  &
\end{array}
$$
Let $D_1=D+L$,  so we have:
\begin{center}
$ \small
\begin{array}{rlccc}
   \left| {D \cap D_1 } \right| =  & 57;& & &  \\
   & & & &\\
   \left| {(D + \sum\limits_{i = 1}^I {T_i } ) \cap D} \right| =  & 60 - (2I) &  &{1 \le I \le 30};&  \\
   & & && \\
   \left| {(D_1  + \sum\limits_{i = 2}^I {T_i } ) \cap D} \right| =  & 60 - (2(I-1) + 3)& & 2 \le I \le 29 . & \\
\end{array}
$
\end{center}
This results in  $I_D(25)=J_D(25)$.
}
\end{proof}
\section{Recursive Construction} 
\setcounter{theorem}{0}\setcounter{preproposition}{0}
\setcounter{precorollary}{0} \setcounter{prelemma}{0}
\setcounter{preexample}{0}
In this section we introduce a Construction which will be applied
in constructing  of  directed designs with  required intersection sizes.

{\bf Construction. }\label{1}
 ~If there exists  a group divisible design $(G,\beta)$
of order $v$ with  block size $5,6$ and   groups each of  size
congruent to  $ ~0,2~(\bmod~5 )$,  then there exists a
$2$-$(2v+1,5,1)$DD.
\begin{proof}
{
Let $(G,\beta)$ be a group divisible design on the element set
$V$. We form a $2$-$(2v+1,5,1)$DD on the element set $(V\times Z_2)
\cup \{\infty\}$ as follows. For each block of size five $b\in
\beta$, say $b=(x_1,x_2,x_3,x_4,x_5)$, we form a $5$-$DGDD$ of type
$2^5$ on $b \times Z_2$, such that its groups are $\{x_1\} \times
Z_2,~\{x_2\} \times Z_2,~\{x_3\} \times Z_2,~\{x_4\} \times
Z_2,~\{x_5\} \times Z_2$ and for each block of  size six $b \in
\beta$, say $b=\{x_1,x_2,x_3,x_4,x_5,x_6\}$, we form a $5$-$DGDD$
of type $2^6$ on $b \times Z_2$, such that its groups are
$\{x_1\} \times Z_2,~\{x_2\} \times Z_2,~\{x_3\} \times
Z_2,~\{x_4\} \times Z_2,~\{x_5\} \times Z_2,\{x_6\} \times Z_2$.
These $DGDDs$ exist, as we will see later on. Finally for each
group $g$ of $G$, we substitute a $2-(2\left| g \right|+1,5,1)DD$
on $(g \times Z_2)\cup \{\infty\}$.
}\end{proof}
In applying  construction  and  future lemmas  we need some
$DGDDs$ and $GDDs$. We may use  $5$-$DGDDs$ ~of type $2^5$ and
$5$-$DGDDs$ of type $2^6$.
 Let $I_G(10)$ be the set of all
possible integers $m$, such that there exist two such $5$-$DGDDs$
of type  $2^5$ with the same groups and exactly $m$ common blocks
so let $I_{G}(12)$ be the set of all possible integers $m$, such
that there exist two such $5$-$DGDDs$ of type  $2^6$ with the same
groups and exactly $m$ common blocks.
\begin{prelemma}\label{2}
$I_{G}(10)=\{0,1,...,6,8\}$.
\end{prelemma}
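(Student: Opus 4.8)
The plan is to treat $I_G(10)$ exactly as the directed intersection set of an $8$-block design, in parallel with the preceding small cases. First I record the basic count. In a $5$-$DGDD$ of type $2^5$ every block must be a transversal, since a block cannot contain both points of a single group (otherwise that within-group pair would be covered by the block as well as lying inside a group). Thus each block is an ordered $5$-tuple meeting the five groups once each and covering ${5 \choose 2}=10$ ordered cross-group pairs. The number of ordered pairs of distinct points lying in different groups is $10\cdot 9 - 5\cdot 2 = 80$, so every such $DGDD$ has exactly $80/10 = 8$ blocks. Consequently $\{0,1,\dots,6,8\}$ has the same shape as $J_D$ with $b=8$: the statement asserts that the full admissible spectrum is attained and that $b-1=7$ is the only gap.

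For the inclusion $I_G(10)\subseteq\{0,1,\dots,6,8\}$ I would rule out $m=7$. Suppose two such $DGDDs$ $D,D'$ with the same groups share exactly $7$ blocks. Those $7$ common blocks cover $70$ ordered cross-pairs, so the single remaining block of $D$ and the single remaining block of $D'$ must each cover the same complementary set of $10$ ordered pairs. A directed block of size $5$ is a transitive tournament on its points, hence is uniquely reconstructible from the $10$ ordered pairs it covers (list the points by out-degree $4,3,2,1,0$). Therefore the eighth blocks of $D$ and $D'$ coincide, forcing $D=D'$ and contradicting that they share exactly $7$. Hence $7\notin I_G(10)$.

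For the reverse inclusion I would fix one explicit $5$-$DGDD$ of type $2^5$, say $D$, written on $\mathrm{Z}_5\times\mathrm{Z}_2$ with groups $\{x\}\times\mathrm{Z}_2$, and intersect $D$ with trade-modifications of itself. The value $m=8$ is $\left|D\cap D\right|$. For the remaining values I exhibit, inside $D$, four mutually disjoint $T(2,5,10)$ directed trades of volume $2$ --- each a ``swap'' trade taking two blocks that share a pair $\{a,b\}$ in swappable (leading) positions with opposite orientations and interchanging those orientations, exactly as in the trades $T_i$ of the preceding lemmas --- together with one directed trade $R$ of volume $3$. Applying $k$ of the volume-$2$ trades changes $2k$ blocks and gives $m=8-2k\in\{8,6,4,2,0\}$; applying $R$ gives $m=5$; and applying $R$ together with one, respectively two, of the volume-$2$ trades disjoint from $R$ gives $m=3$ and $m=1$. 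This realizes every value in $\{0,1,2,3,4,5,6,8\}$.

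The main obstacle is the constructive heart of the reverse inclusion: producing one concrete $5$-$DGDD$ of type $2^5$ (these exist, as noted after the Construction) whose $8$ blocks pair off into four disjoint volume-$2$ swap trades and which also carries a volume-$3$ trade $R$ meeting at most two of those four pairs, so that the combinations yielding $m=3$ and $m=1$ remain available. Because there is no directed trade of volume $1$, no trading can alter a single block, which is the structural reason the gap at $7$ cannot be filled; all other block-change counts $2,\dots,8$ are supplied by the toolkit above. Verifying the pair-coverage identities and the mutual disjointness of the chosen trades is then a finite check on the explicit block list, carried out in the same style as the trades $T_i$, $R$ and $L$ of the preceding small cases.
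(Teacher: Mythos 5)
Your set-up and your upper-bound argument are sound: the count showing every $5$-$DGDD$ of type $2^5$ has exactly $8$ blocks is correct, and your exclusion of $m=7$ (the seven common blocks force the two residual blocks to cover the same $10$ ordered pairs, and an ordered block is reconstructible from its ordered pairs via out-degrees) is a clean argument that the paper in fact leaves implicit. Your overall plan for the reverse inclusion is also exactly the paper's strategy: realize $8,6,4,2,0$ by applying $k$ of four pairwise disjoint volume-$2$ swap trades, realize $5$ by a volume-$3$ trade $R$, and realize $3,1$ by composing $R$ with the volume-$2$ trades disjoint from it (which forces $R$ to be supported on the union of exactly two of the four block-pairs).

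The genuine gap is that you never produce the witnesses, and they are the entire substance of the statement. The claims $0,1,\dots,6\in I_G(10)$ require a concrete $5$-$DGDD$ of type $2^5$ together with a concrete partition of its $8$ blocks into four disjoint volume-$2$ trades and a concrete volume-$3$ trade positioned so as to be disjoint from two of them; you explicitly defer this (``the main obstacle is the constructive heart''), so nothing in $\{0,\dots,6\}$ is actually shown to be attained. This is not a routine finiteness check that can be waved at: it is an existence assertion (that some type-$2^5$ DGDD carries this particular trade structure), and it is precisely what the paper supplies, namely the design on groups $\{1,2\},\{3,4\},\{5,6\},\{7,8\},\{0,9\}$ with blocks $(7,9,6,4,2)$, $(2,3,6,9,8)$, $(5,4,8,1,9)$, $(2,4,7,0,5)$, $(6,0,7,3,1)$, $(8,5,3,0,2)$, $(1,0,8,4,6)$, $(9,1,3,5,7)$, paired into swap trades $T_1,\dots,T_4$, plus the volume-$3$ trade $T_5$ supported on both blocks of $T_4$ and one block of $T_3$, hence disjoint from $T_1,T_2$, yielding $5$, then $3$ and $1$. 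Until you exhibit such a design and such trades (and check disjointness and the trade property on the explicit block list), your proof establishes only $I_G(10)\subseteq\{0,1,\dots,6,8\}$ together with $8\in I_G(10)$, not the stated equality.
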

\begin{proof}{
Let $G$ be a  $5$-$DGDD$~ of type  $2^5$  with  the following blocks
and groups.\\
 groups:$$\{1,2\},~\{3,4\},~\{5,6\},~\{7,8\},~\{0,9\}$$
 blocks:$$
\small
\begin{array}{cccc}
         (7,9,6,4,2)  & (2,3,6,9,8) & (5,4,8,1,9) & (2,4,7,0,5) \\
         (6,0,7,3,1) & (8,5,3,0,2) & (1,0,8,4,6) & (9,1,3,5,7) \\
       \end{array}
$$
 Now we list some small directed trades:

\vspace{.2cm}
 $
 \small
\begin{array}{ccccc}
  \underline{Directed~ trades} &  & \underline{Blocks~ removed} &  & \underline{Blocks ~added} \\
  T_1 &  & (7,9,6,4,2)~(2,3,6,9,8) &  & (7,6,9,4,2)~(2,3,9,6,8) \\
  T_2 &  & (5,4,8,1,9)~(1,0,8,4,6) &  & (5,8,4,1,9)~(1,0,4,8,6) \\
  T_3 &  & (2,4,7,0,5)~ (6,0,7,3,1) &  & (2,4,0,7,5)~ (6,7,0,3,1) \\
  T_4 &  & (8,5,3,0,2)~(9,1,3,5,7) &  & (8,3,5,0,2)~(9,1,5,3,7) \\
  T_5 &  & (8,5,3,0,2)~(9,1,3,5,7)~(2,4,7,0,5) &  & (8,3,0,5,2)~(9,1,5,3,7)~(2,4,7,5,0)
\end{array}$

\vspace{.1cm}

 Let $G_1=G+T_5$,~we have:
$$
\small
\begin{array}{rlcc}
  \left| {G  \cap G_1  } \right| = &  5;& &\\
   &  & & \\
   \left| {G  \cap (G  + \sum\limits_{i = 1}^I {T_i } )} \right| = & 8 - (2i)
    & {1 \le i \le 4};&\\
   &  & & \\
  \left| {G  \cap (G_1  + \sum\limits_{i = 1}^I {T_i } )} \right| = &  8 - (2i+3)
   & {1 \le i \le 2}. & \end{array}$$
\vspace{.1cm}
 This results in  $I_{G}(10)=\{0,1,...,6,8\}$.}
\end{proof}
\begin{prelemma}\label{3}
$I_{G}(12)=\{0,1,...,10,12\}$.
\end{prelemma}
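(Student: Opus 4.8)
The plan is to mirror the explicit trade-off construction used for $I_G(10)$ in Lemma~\ref{2}. First I would record the block count: a $5$-$DGDD$ of type $2^6$ lives on $12$ points split into six groups of size two, so there are $12\cdot 11-6\cdot 2=120$ ordered pairs from distinct groups, each block supplies $\binom{5}{2}=10$ of them, and hence the design has exactly $b=120/10=12$ blocks. Consequently, if two such designs share $11$ blocks, the uncovered cross-group pairs are the same for both, and since those $10$ ordered pairs determine both the underlying $5$-set and its (transitive) order, the twelfth block is forced to coincide as well. This gives the easy inclusion $I_G(12)\subseteq\{0,1,\dots,10,12\}$, and the real work is to realize every value in this set.

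For the reverse inclusion I would exhibit one concrete $5$-$DGDD$ $G$ of type $2^6$ by listing its six groups and its $12$ ordered blocks, exactly in the style of Lemma~\ref{2}. Inside $G$ I would then locate a suitable family of mutually block-disjoint directed trades: six trades $T_1,\dots,T_6$ of volume $2$, together using all $12$ blocks of $G$ (one pair of blocks per trade), and at least one directed trade $T$ of volume $3$. Each $T_j$ is produced by permuting two coordinates within a pair of blocks so that no cross-group ordered pair is disturbed, which is precisely the mechanism already used in the small cases; verifying the trade property amounts to checking that every cross-group ordered pair is preserved.

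With these trades in hand, the trade-off technique of Section~1 delivers the spectrum directly. Applying the volume-$2$ trades successively yields $|\,G\cap(G+\sum_{i=1}^{I}T_i)\,|=12-2I$ for $0\le I\le 6$, covering the even values $12,10,8,6,4,2,0$. Setting $G_1=G+T$ gives $|\,G\cap G_1\,|=12-3=9$, and then applying the volume-$2$ trades that are block-disjoint from $T$ gives $|\,G\cap(G_1+\sum T_i)\,|=9-2I$ for $0\le I\le 4$, covering the odd values $9,7,5,3,1$. Together these realize all of $\{0,1,\dots,10,12\}$, which proves the claim.

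The main obstacle is the explicit search underlying the second paragraph: one must produce a single $5$-$DGDD$ of type $2^6$ that simultaneously admits six block-disjoint volume-$2$ trades \emph{and} a volume-$3$ trade $T$ overlapping at most two of them. Indeed, reaching intersection $1$ forces $9-2\cdot 4=1$, so four of the volume-$2$ trades must be block-disjoint from the three blocks consumed by $T$; the natural way to arrange this is to build $T$ from both blocks of one pair together with one block of a second pair, leaving the remaining four pairs untouched. Because only $12$ blocks are available, this is a tight constraint, and it is exactly the point where an ad hoc but careful choice of $G$ (and the accompanying verification that each listed $(T_1,T_2)$ is genuinely a trade) is required.
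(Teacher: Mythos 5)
Your strategy is exactly the paper's: exhibit one $5$-$DGDD$ of type $2^6$, find six block-disjoint directed trades of volume $2$ exhausting its $12$ blocks together with a volume-$3$ trade overlapping them suitably, and run the trade-off count ($12-2I$ for the even values; $9-2I$, after swapping in the volume-$3$ trade, for the odd values). Your block count $b=12$, your argument ruling out intersection $11$ (the ten uncovered ordered pairs form a transitive tournament and hence determine the missing block), and even your predicted overlap pattern --- the volume-$3$ trade should consume both blocks of one volume-$2$ trade plus one block of a second, leaving four trades untouched --- all agree with the paper; indeed the paper's $T_7$ consists of the two blocks of its $T_6$ together with one block of its $T_5$, so that $T_1,\dots,T_4$ remain applicable to $G_1=G+T_7$ and yield $7,5,3,1$.

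Nevertheless the proposal has a genuine gap, and you name it yourself: the $DGDD$ and its trades are never produced. The entire substance of this lemma is the existence of such a configuration; your text only asserts that an ad hoc but careful choice of $G$ ought to work, and nothing in the proposal guarantees that a type-$2^6$ $5$-$DGDD$ admitting six block-disjoint volume-$2$ trades and a compatible volume-$3$ trade actually exists. The paper closes this by explicit exhibition: groups $\{1,2\},\{3,4\},\{5,6\},\{7,8\},\{9,10\},\{0,11\}$, twelve listed blocks, trades $T_1,\dots,T_6$ obtained by transposing two entries in each pair of blocks, and the volume-$3$ trade $T_7$, each verifiable by checking that every cross-group ordered pair is preserved. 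Until that finite construction (or some other existence argument) is supplied, what you have is a correct plan for the proof, not a proof.
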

\begin{proof}{
Let $G$ be a $5$-$DGDD$~ of type $2^6$ with the following blocks
and groups.\\
groups:$$\{1,2\},~\{3,4\},~\{5,6\},~\{7,8\},~\{9,10\},\\~\{0,11\}$$
blocks:
$$
\small
\begin{array}{cccc}
             (1,3,0,7,6)     & (2,4,11,8,5)& (0,10,5,1,4) & (6,8,4,0,9) \\
             (4,2,10,6,7)      & (8,6,1,10,11)      & (11,9,6,2,3)     & (9,11,7,4,1) \\
             (3,1,9,5,8)     & (7,5,2,9,0)      & (5,7,3,11,10)      & (10,0,8,3,2)
 \end{array}$$
 Now we
list some small directed trades:
\vspace{.1cm}

$ \small
\begin{array}{ccccc}
  \underline{Directed ~trades} &  & \underline{Blocks ~removed} &  & \underline{Blocks ~added} \\
  T_1 &  & (1,3,0,7,6)~(3,1,9,5,8)            &  & (3,1,0,7,6)~(1,3,9,5,8) \\
  T_2 &  & (4,2,10,6,7)~(2,4,11,8,5)            &  & (2,4,10,6,7)~(4,2,11,8,5) \\
  T_3 &  & (8,6,1,10,11)~(6,8,4,0,9)               &  & (6,8,1,10,11)~(8,6,4,0,9) \\
  T_4 &  & (7,5,2,9,0)~(5,7,3,11,10)              &  & (5,7,2,9,0)~(7,5,3,11,10) \\
  T_5 &  & (0,10,5,1,4)~(10,0,8,3,2)           &  & (10,0,5,1,4)~(0,10,8,3,2) \\
  T_6 &  & (11,9,6,2,3)~(9,11,7,4,1)              &  & (9,11,6,2,3)~(11,9,7,4,1) \\
  T_7 &  & (11,9,6,2,3)~(9,11,7,4,1)~(0,10,5,1,4) &  & (9,11,6,2,3)~(11,9,7,1,4)~(0,10,5,4,1) \\
\end{array}
$

\vspace{.1cm}

Let $G_1=G+T_7$,  we have:

\vspace{.1cm}

$ \small
\begin{array}{rlcc}
  \left| {G  \cap  G_1} \right| = & 9; & &\\
  & & & \\
   \left| {G  \cap (G  + \sum\limits_{i = 1}^I {T_i } )} \right| = & 12 - (2i)
    & {1 \le i \le 6};&\\
  & & & \\
  \left| {G  \cap (G_1 + \sum\limits_{i = 1}^I {T_i } )} \right| = & 12 - (2i+3) &
   {1 \le i \le 4}.&
\end{array}$

\vspace{.1cm}

This results  in $I_{G}(12)=\{0,1,...,10,12\}$.}
\end{proof}
\begin{prelemma}\label{4}
Let $(G,\beta)$ be a group divisible design of  order $v$ with
$r$ blocks of  size $5$ and $s$ blocks of  size $6$, and $p$
groups each of size congruent to  $ ~0,2~(\mathrm{\bmod}~5 )$. For $1 \le
i \le r$, let $a_i \in I_{G}(10)$; for $1 \le i \le s$, let $c_i
\in I_{G}(12)$; for $1 \le i \le p$, let $d_{i} \in I_D (2\left|
g \right| + 1)$. Then there exist two $2$-$(2v+1,5,1)$DDs
intersecting in precisely $$ \sum\limits_{j = 1}^{r } {a_j }  +
\sum\limits_{j = 1}^{s } {c_j } + \sum\limits_{i = 1}^p {d_i }$$
blocks.
\end{prelemma}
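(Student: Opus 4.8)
The plan is to build the two designs exactly as in the Construction, choosing the ingredient sub-designs in pairs, and then to exploit the fact that the blocks contributed by distinct ingredients are pairwise disjoint, so that the intersection of the two assembled designs is simply the sum of the ingredientwise intersections.

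First I would fix the group divisible design $(G,\beta)$ and the point set $(V\times Z_2)\cup\{\infty\}$ as in the Construction. The blocks of any $2$-$(2v+1,5,1)$DD produced by that construction split into $r+s+p$ families: for each block $b_j$ of size $5$, the blocks of a $5$-$DGDD$ of type $2^5$ on $b_j\times Z_2$ with groups $\{x\}\times Z_2$, $x\in b_j$; for each block $b_j$ of size $6$, the blocks of a $5$-$DGDD$ of type $2^6$ on $b_j\times Z_2$; and for each group $g_i$, the blocks of a $2$-$(2|g_i|+1,5,1)$DD on $(g_i\times Z_2)\cup\{\infty\}$. To obtain $D$ and $D'$ I would, in each family, select a pair of sub-designs sharing the same groups and meeting in the prescribed number of blocks: by $a_j\in I_G(10)$ there are two $5$-$DGDD$s of type $2^5$ meeting in exactly $a_j$ blocks (Lemma \ref{2}); by $c_j\in I_G(12)$, two of type $2^6$ meeting in exactly $c_j$ blocks (Lemma \ref{3}); and by $d_i\in I_D(2|g_i|+1)$, two $2$-$(2|g_i|+1,5,1)$DDs on $(g_i\times Z_2)\cup\{\infty\}$ meeting in exactly $d_i$ blocks. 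Taking the first member of every pair yields $D$ and the second member yields $D'$; both are $2$-$(2v+1,5,1)$DDs by the Construction.

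The heart of the argument, and the step I expect to require the most care, is showing that the intersection is additive, i.e. that a common block cannot straddle two different families. This follows from the defining properties of a GDD with $\lambda=1$: any two distinct blocks of $(G,\beta)$ meet in at most one point, and any block meets any group in at most one point. Hence a block coming from a size-$5$ or size-$6$ family has first coordinates equal to the point set of a single block $b_j$, so it can match a block of another block-family only if $b_j=b_{j'}$, and it can never match a group-family block (that would force five points of $b_j$ to lie in one group). Likewise, two group-family blocks from distinct groups have their non-$\infty$ first coordinates lying in disjoint groups, so they cannot coincide even though all group-families share the point $\infty$. Thus the $r+s+p$ families are pairwise block-disjoint inside each of $D$ and $D'$, and any block common to $D$ and $D'$ is forced into matching families. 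Consequently the common blocks are exactly the union over families of the common blocks of the two chosen sub-designs, which number $a_j$, $c_j$, and $d_i$ respectively, giving $|D\cap D'|=\sum_{j=1}^{r}a_j+\sum_{j=1}^{s}c_j+\sum_{i=1}^{p}d_i$, as claimed.
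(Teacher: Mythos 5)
Your proposal is correct and follows essentially the same route as the paper: place, on each block and each group of the same underlying GDD, a pair of ingredient sub-designs meeting in the prescribed number of blocks, and sum the intersections. The paper's own proof is even terser, leaving the additivity of the intersection implicit, whereas you justify it via the $\lambda=1$ property of the GDD; your only slip is that a block arising from a size-$6$ block $b_j$ has its first coordinates forming a $5$-subset of $b_j$ rather than all of $b_j$, which does not affect the conclusion.
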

\begin{proof}{
Using construction, take two copies of the same group divisible
design $(G,\beta)$ and construct on them two $2$-$(2v+1,5,1)$DDs.
Corresponding to each  block of size $5$, say
$b_i=\{x_1,x_2,x_3,x_4,x_5\}$, for $1 \le i \le r$ place on $b_i
\times Z_2$ in the two systems $5-DGDs$ of type $2^5$ having the
same groups, and $a_i$ blocks in common. Corresponding to each
 block of size $6$, say $b_i=\{x_1,x_2,x_3,x_4,x_5,x_6\}$, for
$1 \le i \le s$  place on $b_i \times Z_2$ in the two systems
$5-DGDs$ of type $2^6$ having the same groups, and $c_i$ blocks
in common and corresponding to each  group $g$  of $G$ place two
$2$-$(2\left| g \right| + 1,5,1)$DDs with $d_i$ blocks in  common.
}\end{proof}
 We state following  Propositions which  provide the
bases for main results in the next section.
 \begin{preproposition}\label{5}
~{\rm \cite{FEB}}~ Let $v\geq 101$, and $v\equiv 1 ~or ~5 ~({\bmod}
~20)$, If $v\neq 141$, then $PBD(v,\{5,25^*\})$ exists.
\end{preproposition}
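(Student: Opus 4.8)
The plan is to translate the statement into the existence of a group divisible design and then to build that design by a Wilson-type recursion. A $PBD(v,\{5,25^*\})$ has, by definition, exactly one block $B_0$ of size $25$ and all other blocks of size $5$. Deleting $B_0$ while keeping its $25$ points, every pair inside $B_0$ is left uncovered, every other pair still lies in a unique block of size $5$, and no size-$5$ block can meet $B_0$ in two points; hence the residual structure is precisely a $\{5\}$-GDD of type $1^{v-25}25^1$, that is, a Steiner system $S(2,5,v)$ containing a sub-system of order $25$. Conversely, re-adjoining the group of size $25$ as a single block recovers the $PBD$, so it suffices to construct this GDD (equivalently, to embed an $S(2,5,25)$ into an $S(2,5,v)$).

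First I would verify that the hypotheses are exactly the necessary conditions. The divisibility and replication counts ($\frac{v(v-1)}{2}-\frac{25\cdot 24}{2}\equiv 0\pmod{10}$, with replication numbers $(v-1)/4$ and $(v-25)/4$) are all integral for every $v\equiv 1,5\pmod{20}$, so no such residue is excluded arithmetically. The threshold $v\ge 101$ is itself a necessary condition: a point $y$ outside the hole lies in a block with each of the $25$ hole points, and any block through $y$ can meet the hole in at most one point (two hole points occur together only in $B_0$, which omits $y$); thus $y$ needs at least $25$ blocks, giving $(v-1)/4\ge 25$, that is $v\ge 4\cdot 25+1=101$. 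So the content of the proposition is that these necessary conditions are also sufficient, with the single sporadic exception $v=141$.

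For sufficiency I would use Wilson's Fundamental Construction with ingredient designs of the kind already in play. Starting from a transversal design $TD(6,n)$ (four mutually orthogonal Latin squares of order $n$, available for the relevant $n$ with only small exceptions), I would weight the points and substitute $\{5\}$-GDDs of type $2^5$ and $2^6$ — the undirected counterparts of the blocks appearing in Lemmas \ref{2} and \ref{3} — to inflate the master design into a $\{5\}$-GDD whose group sizes are dictated by $n$ and the weights. I would choose $n$ and the weight vector so that exactly one group has size $25$ and every other group has size congruent to $1$ or $5\pmod{20}$, with the group sizes summing to $v$. Filling every group other than the distinguished one with a genuine $\{5\}$-PBD (which exists since each such order is admissible) and leaving the size-$25$ group as the single large block then produces the required $PBD(v,\{5,25^*\})$. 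Because the hole has size $25$ and the relevant master designs have a fixed shape, the case analysis naturally organises $v$ by its residue modulo $100=4\cdot 25$, the modulus that already governs the threshold $101$.

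The main obstacle I anticipate is this bookkeeping rather than any single hard construction: one must guarantee, uniformly across both residue classes, the simultaneous availability of the master $TD(6,n)$, of enough mutually orthogonal Latin squares to realise the chosen weights, and of the $\{5\}$-PBDs used to fill the non-distinguished groups, all while forcing \emph{precisely} one group to have size $25$. The divisibility and the generic recursion are routine; the delicate part is pinning down the finitely many small orders that the uniform construction misses, which is exactly what leaves $v=141$ as the lone exception in the stated range and is handled separately in \cite{FEB}.
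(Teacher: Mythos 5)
First, a point of comparison: the paper does not prove this proposition at all. It is quoted, with attribution, from Bennett and Yin \cite{FEB}, and is used as a black box in Theorem B. So there is no internal proof to measure your sketch against; the only question is whether your sketch could stand on its own as a proof, and in its present form it cannot. Your reduction of $PBD(v,\{5,25^*\})$ to a $\{5\}$-GDD of type $1^{v-25}25^1$ (equivalently, an $S(2,5,v)$ with a subsystem of order $25$, using the existence of $S(2,5,25)$) is correct, and so is your derivation of the necessary bound: a point off the hole must see the $25$ hole points in $25$ distinct blocks, so $(v-1)/4\geq 25$, i.e.\ $v\geq 101$.

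The gap is in the sufficiency argument, and it is concrete: the ingredient designs you propose do not exist. A $\{5\}$-GDD of type $2^5$ with index $1$ is exactly a $TD(5,2)$, i.e.\ three mutually orthogonal Latin squares of order $2$, which do not exist; and a $\{5\}$-GDD of type $2^6$ with index $1$ violates elementary counting, since each point lies on $10$ cross pairs but each block through it covers $4$ of them, forcing a non-integral replication number $10/4$. The designs of Lemmas \ref{2} and \ref{3} are \emph{directed} GDDs in which every ordered cross pair occurs once, so their underlying undirected index is $2$, not $1$; they are not ``undirected counterparts'' usable in Wilson's Fundamental Construction for an index-$1$ PBD, and weighting a $TD(6,n)$ master uniformly by $2$ would require precisely the nonexistent type-$2^6$ ingredient. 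Any correct recursion must use ingredients that exist at index $1$ (for instance $\{5\}$-GDDs of type $5^n$ as in Proposition \ref{14}, or transversal designs $TD(5,n)$, $TD(6,n)$ with suitable point weights), together with a genuine treatment of the small orders. Finally, you defer exactly those small orders and the exception $v=141$ back to \cite{FEB}, which makes the argument circular as a proof of the statement; what you have is a plausible outline of the kind of argument carried out there, not a proof.
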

\begin{preproposition}\label{6}
~{\rm \cite{FEB}}~ Let $v\geq 85$, and $v\equiv 1~ or~ 5 ~({\bmod}
~20)$, If $v\neq 125$, then $PBD(v,\{5,21^\star \})$ exists.
\end{preproposition}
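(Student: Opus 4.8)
The plan is to recast the statement as a \emph{subdesign embedding} problem for Steiner systems and then to invoke the standard recursive machinery. The starred notation means the design has exactly one block of size $21$, all others having size $5$. If one deletes the unique block $B_0$ of size $21$ from a $PBD(v,\{5,21^\star\})$, the surviving blocks all have size $5$ and cover precisely the pairs \emph{not} contained in $B_0$; moreover no such block can meet $B_0$ in two points, since that pair would then be covered twice. Hence the surviving blocks form a $\{5\}$-GDD of type $1^{v-21}21^1$. Filling the group $B_0$ with a copy of an $S(2,5,21)$, which exists because $21\equiv1\pmod{20}$, produces a genuine $S(2,5,v)$ in which the $21$ points of $B_0$ carry a sub-$S(2,5,21)$; conversely, deleting the blocks of such a subsystem recovers the PBD. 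I would therefore prove the proposition in the equivalent form: \emph{there is an $S(2,5,v)$ containing a sub-$S(2,5,21)$.}

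In this form the hypotheses become transparent. The congruence $v\equiv1,5\pmod{20}$ is exactly Hanani's admissibility condition for $S(2,5,v)$, and the bound $v\ge85$ is the classical necessary condition $v\ge(k-1)w+1$ for an $S(2,k,v)$ to contain a sub-$S(2,k,w)$: counting the blocks through a point outside the hole, each meets the hole in at most one point, so $(v-1)/(k-1)\ge w$, which for $k=5$, $w=21$ gives $v\ge4\cdot21+1=85$. (The same count gives $4\cdot25+1=101$, explaining the bound in the companion Proposition~\ref{5}.) The real content is thus the \emph{sufficiency} of these two conditions, with a single value set aside.

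For sufficiency I would use Wilson's fundamental construction together with transversal designs and a short list of small ingredient systems. Writing $v=20t+1$ or $v=20t+5$, take a suitable master GDD --- typically a (truncated) transversal design on $t$ points --- assign weight $5$ to most points, replace each master block by a $TD(5,5)$ or $TD(6,5)$ (both available since $5$ is a prime power), and then fill the groups, adjoining the $21$ hole points to one distinguished group and completing each filled group by an ingredient drawn from a finite list such as $S(2,5,21),S(2,5,25),S(2,5,41),S(2,5,45)$. Letting $t$ range over the admissible values covers each residue class, and the two classes $1$ and $5$ modulo $20$ are handled by parallel families of this kind. The finitely many admissible orders lying below the range where the generic construction engages would be disposed of by direct ad hoc constructions.

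The main obstacle is not any individual construction but the global bookkeeping: one must check that the overlapping families leave no admissible gap and that the excluded set is exactly $\{125\}$. This reduces to verifying that each instance calls only for transversal designs $TD(5,n)$ and $TD(6,n)$ that genuinely exist for the required $n$. Here the scarcity of mutually orthogonal Latin squares of order $6$ --- only $N(6)=1$ --- is precisely what obstructs the parameter feeding $v=125$, so I expect $v=125$ to be the unique order the uniform method cannot reach; settling whether a design exists there at all would require either a sporadic direct construction or a separate nonexistence argument, and is left aside. Since the companion Proposition~\ref{5} is proved by the identical machinery with $25$ in place of $21$, I would develop the two results in tandem, reusing the same master designs and ingredient lists wherever possible.
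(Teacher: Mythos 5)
There is no proof of this statement in the paper to compare against: Proposition~B is imported verbatim from Bennett and Yin \cite{FEB}, and the authors use it as a black box. So the only question is whether your blind attempt actually establishes the result, and it does not. Your preliminary reductions are sound --- the equivalence between a $PBD(v,\{5,21^\star\})$ and an $S(2,5,v)$ containing a sub-$S(2,5,21)$ (using the existence of $S(2,5,21)$, the projective plane of order $4$, to fill the long block), and the counting argument giving the necessary bound $v\ge 4\cdot 21+1=85$ (each block through a point off the hole meets the hole in at most one point), are both correct, and they do explain the shape of the hypotheses. But everything after that is a plan, not a proof. The entire content of the proposition is the \emph{sufficiency} claim: that for every $v\equiv 1,5\pmod{20}$ with $v\ge 85$ and $v\ne 125$ the recursive machinery actually produces the design. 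You defer precisely this --- no master GDDs are specified, no weight assignments are checked, the ``finitely many admissible orders lying below the range where the generic construction engages'' are neither listed nor constructed, and the claim that the overlapping families ``leave no admissible gap'' is asserted rather than verified. That bookkeeping is not a routine afterthought; it is the theorem, and it occupies the bulk of the Bennett--Yin paper.

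A second concrete defect: your explanation of the single exception $v=125$ via $N(6)=1$ is a conjecture, not an argument. You neither exhibit the specific parameter instance that would require a $TD(6,6)$ nor show that no alternative construction reaches $125$; indeed the proposition only excludes $125$ from the statement (as a value not covered by the cited construction), and the present paper routes around it using Proposition~A ($PBD(125,\{5,25^\star\})$) instead, so no nonexistence claim is being made that your heuristic could be matched against. As written, the proposal correctly identifies the standard framework but proves nothing beyond the necessity of $v\ge 85$; to be acceptable it would have to either carry out the Wilson-type constructions explicitly for each residue class and each small order, or simply cite \cite{FEB} as the paper does.
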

\begin{preproposition}\label{7}
~{\rm \cite{Hanani}}~ Let $u \equiv 0 ~or ~2 ~({\bmod} ~5)$, $~u \neq
7$, and $M=\{2,5,10,12,
15,17,20,22,32,35,\\37,40,42,45,47,50,52,55,57,67,
75,77,80,82,92,105,107, 110,112,115,117,120,122,132,167\}$,  then
there exists a $\{5,6\}-GDD$  of type $M$ and order $u$.
\end{preproposition}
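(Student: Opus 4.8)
This is a pure existence statement about group divisible designs with block sizes $5$ and $6$, so my plan is the standard two-phase strategy for such problems: first assemble a finite library of directly constructed ingredient designs, and then propagate them to every admissible order by recursive constructions. I would begin by pinning down the necessary conditions so that the target residue classes are transparent. Fix a point $x$ in a group of size $g$; each of the $u-g$ points outside this group is joined to $x$ by exactly one block, and a block of size $5$ (respectively $6$) through $x$ absorbs $4$ (respectively $5$) of them, so $u-g$ must be expressible as $4a+5b$ with $a,b\ge 0$. Since the numerical semigroup generated by $4$ and $5$ omits exactly $1,2,3,6,7,11$, this local condition together with the global pair count $\frac{u(u-1)}{2}-\sum_i\frac{g_i(g_i-1)}{2}\equiv 0\pmod{5}$ forces $u\equiv 0,2\pmod 5$ and every $g_i\equiv 0,2\pmod 5$; one also checks directly that $u=7$ (whose only $M$-admissible type is $2^1 5^1$) violates the local condition at the group of size $5$, which is why it is excluded.

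For sufficiency I would base the recursion on transversal designs. The backbone ingredients are $TD(6,m)$ and $TD(5,m)$, that is, uniform $\{6\}$- and $\{5\}$-GDDs of types $m^6$ and $m^5$ (equivalently, families of mutually orthogonal Latin squares of order $m$); truncating one group of a $TD(6,m)$ to $t$ points produces a genuine $\{5,6\}$-GDD of mixed type $m^5 t^1$, and these are exactly the blocks fed into Wilson's Fundamental Construction. Weighting the points of a master GDD and replacing each block by an already-constructed $\{5,6\}$-GDD yields a new $\{5,6\}$-GDD whose group sizes are the weighted group sizes. Because the admissible weights keep the residues controlled modulo $5$, and because $M$ contains the generators $2$ and $5$, the weights can be chosen so that every resulting group size again lies in $M$; this is precisely the property the list $M$ is designed to have.

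With this machinery in place I would reach all sufficiently large admissible $u$ by starting from a $TD(6,m)$, truncating groups to adjust the order modulo $5$, and then filling the groups with smaller $\{5,6\}$-GDDs already constructed, running an induction on $u$ that rests on the PBD-closure of $\{5,6\}$. The conversion of pairwise balanced designs into group divisible designs here is in the same spirit as the PBD constructions recorded in Propositions~\ref{5} and~\ref{6}: one gives the points weights and substitutes ingredient GDDs on the blocks. The set $M$ then emerges as the union of the small group sizes that must be built by hand and the sizes forced by these PBD ingredients.

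The main obstacle is the small and sporadic range of $u$. For orders just above the threshold where a suitable transversal design first exists, and for realizing the large isolated members of $M$ such as $67$, $92$, $132$ and $167$, the needed families of mutually orthogonal Latin squares have gaps, the recursion stalls, and each such case has to be settled by an explicit direct construction or a finite computer search for the corresponding seed GDD. Verifying that this finite set of awkward orders is completely closed off --- and dually that $u=7$ is the \emph{only} genuinely impossible value while every other admissible order is reachable --- is the delicate bookkeeping that dominates the proof; once the seed library is complete, the recursive step itself is routine.
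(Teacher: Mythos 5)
The first thing to observe is that the paper does not prove this statement at all: the proposition is imported verbatim from Hanani's 1975 paper via the citation, so there is no internal argument to measure your attempt against. That said, your framing is reasonable. The necessary-conditions analysis is sound: the replication argument at a point of a group of size $g$ does force $u-g$ into the numerical semigroup generated by $4$ and $5$, whose gaps are $1,2,3,6,7,11$, and together with the divisibility of the pair count this correctly explains the congruence $u\equiv 0,2\pmod 5$ and the exclusion of $u=7$ (whose only $M$-admissible type $2^{1}5^{1}$ fails at the group of size $5$). Your proposed sufficiency architecture --- truncated transversal designs $TD(6,m)$, Wilson's Fundamental Construction with weights controlled modulo $5$, and the PBD-closure of $\{5,6\}$ --- is the standard and correct machinery for results of this kind.

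The genuine gap is that this architecture is all you supply: what you have written is a plan for a proof, not a proof. Every piece of actual mathematical content is deferred. You exhibit no seed GDD for any concrete order; you do not specify which master designs and which weights realize which residue classes of $u$; you do not verify that the transversal designs $TD(6,m)$ (equivalently, four mutually orthogonal Latin squares of order $m$) exist for the values of $m$ your recursion needs, which is precisely where such arguments break down for small $m$; and you explicitly leave the sporadic group sizes such as $67$, $92$, $132$, $167$ and the finitely many awkward small orders ``to be settled by an explicit direct construction or a finite computer search'' without performing either. Since the theorem is exactly the assertion that this finite bookkeeping closes --- the recursive step being, as you say yourself, routine --- your text establishes the plausibility of the statement but not its truth. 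The correct disposition here is the one the paper takes: cite Hanani for the result rather than claim an independent proof.
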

\begin{preproposition}\label{14}
~{\rm \cite{BCM}}~There exists a $\{5,6\}$-$GDD$ of type $5^n $, for
all  $n \ge 5$, except possibly when $n \in \{7,8,10,16\}$.
\end{preproposition}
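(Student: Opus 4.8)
The plan is to treat this as a standard existence problem for group divisible designs with the two admissible block sizes $5$ and $6$, combining a finite list of directly constructed seed designs with recursive (Wilson-type) constructions that lift them to all large $n$. First I would record the necessary conditions. A block of size $k$ is a partial transversal meeting $k$ distinct groups, so each point of a group must be joined to the $5(n-1)$ points outside its group; writing $r_5,r_6$ for the numbers of blocks of sizes $5,6$ through that point gives $4r_5+5r_6=5(n-1)$, while counting all cross-group pairs gives $2b_5+3b_6=\frac{5n(n-1)}{2}$. Both are solvable in nonnegative integers for every $n\ge 5$ (take $r_5=5,\,r_6=0$ at $n=5$ and $r_5=0,\,r_6=n-1$ for $n\ge 6$, and note that $2b_5+3b_6$ represents every integer $\ge 2$), so the arithmetic imposes nothing beyond $n\ge 5$; all the difficulty lies in realizing these orders.

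The guiding structural remark is that a pure size-$5$ design forces $n\equiv 1~(\bmod~4)$ (from $4\mid 5(n-1)$) and a pure size-$6$ design forces $n\equiv 0,1~(\bmod~3)$ (from $6\mid 5n(n-1)$), so mixing the two sizes is exactly what covers the remaining residues---already $n=11$, which is $3~(\bmod~4)$ and $2~(\bmod~3)$, admits neither pure design and must use both sizes. I would therefore stock the pure ingredients, namely the $5$-GDDs of type $5^n$ for $n\equiv 1~(\bmod~4)$ (whose spectrum begins with $TD(5,5)$, the $5$-GDD of type $5^5$) and the $6$-GDDs of type $5^n$ for $n\equiv 0,1~(\bmod~3)$ (beginning with $TD(6,5)$, the $6$-GDD of type $5^6$, which exists because there are $4$ MOLS of order $5$), and then build a finite set of genuinely mixed seeds by the difference method over $Z_5\times Z_n$, taking the $Z_5$-columns as groups and developing base blocks in the second coordinate---exactly the style of the explicit $v=25$ construction of Section $2$.

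With one seed realizing each residue class in hand, the inductive step is Wilson's Fundamental Construction together with the filling-in-groups construction. Giving every point of a master GDD weight $5$ and replacing each block of size $k$ by a transversal design $TD(k,5)$, or by a small $\{5,6\}$-GDD of type $5^k$, inflates its type by the factor $5$; filling each resulting group of size $5t$ with a seed $\{5,6\}$-GDD of type $5^t$ then yields type $5^{tm}$, while truncating one group of a master transversal design supplies the intermediate orders. Starting the master from $TD(5,5t)$ or $TD(6,5t)$ and iterating, these moves produce a $\{5,6\}$-GDD of type $5^n$ for all sufficiently large $n$, after which a finite sieve over the small orders completes the range $n\ge 5$.

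The hard part will be the sporadic small orders, and specifically the verification that the exception set is no larger than $\{7,8,10,16\}$. The recursions only fire once the requisite sub-GDDs and transversal designs are available, so the least orders must be settled by direct search. Here $n=8$ is ruled out of both pure families even arithmetically ($8\equiv 0~(\bmod~4)$ and $8\equiv 2~(\bmod~3)$), so only a fully mixed design could serve; for $n=7,10,16$ the counting actually permits a pure $6$-GDD of type $5^n$ (each being $1~(\bmod~3)$), yet no such design---nor any mixed one, nor any small recursion---has been found. I would attack these four by exhaustive or heuristic search for base blocks under a cyclic or bicyclic automorphism and, failing a construction, list them as the only possible exceptions, exactly as stated. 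Producing the infinite families is routine design-theory bookkeeping; pinning the residual exceptions down to precisely $\{7,8,10,16\}$ is what carries the real weight of the theorem.
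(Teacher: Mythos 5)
First, a point of comparison: the paper does not prove this statement at all. Proposition 3.4 is quoted from Bennett, Colbourn and Mullin \cite{BCM} and used as a black box, so there is no internal proof to measure yours against. Taken on its own terms, your outline correctly identifies the arithmetic ($4r_5+5r_6=5(n-1)$ at each point, $2b_5+3b_6=5n(n-1)/2$ globally, the residues $n\equiv 1\ (\bmod\ 4)$ for a pure $5$-GDD and $n\equiv 0,1\ (\bmod\ 3)$ for a pure $6$-GDD) and names the right toolkit: seed designs for each congruence class, Wilson's fundamental construction with weight $5$, truncated transversal designs, and filling in groups. That is indeed how results of this kind are established.

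The genuine gap is that nothing load-bearing is ever carried out. The ``finite set of genuinely mixed seeds'' is never exhibited (not even for $n=11$, which you correctly single out as requiring both block sizes); the claim that the recursion yields type $5^n$ for all sufficiently large $n$ is asserted without specifying which master designs and truncations cover which residues, and without any bound on ``sufficiently large''; and the small orders are relegated to ``exhaustive or heuristic search'' and ``routine bookkeeping.'' But the entire content of the proposition is the assertion that an explicit construction exists for \emph{every} $n\ge 5$ outside $\{7,8,10,16\}$ --- including the many sporadic values that no pure family reaches --- and that is exactly what is deferred. As written this is a credible research plan for rediscovering the result, not a proof of it; to close the gap you would have to either reproduce the constructions of \cite{BCM} in detail or, as the paper does, cite them.
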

\begin{preproposition}\label{12}
~{\rm \cite{ES}}~There exists a $\{5,6\}$-$GDD$ of type $5^{4t + 1}
u$, for $0 \le u \le 5t$, for all
 $t \ge 1,~t \notin\{2,17,23,32\}$.
\end{preproposition}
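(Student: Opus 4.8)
The plan is to obtain the design by adjoining ideal points to a resolvable ingredient, with the parameter ranges then falling out automatically. The starting point is a \emph{resolvable} $\{5\}$-GDD of type $5^{4t+1}$: it has $5(4t+1)$ points split into $4t+1$ groups of size $5$, all blocks of size $5$, and by the replication count $4r = 5\bigl((4t+1)-1\bigr)$ it partitions into exactly $r = 5t$ parallel classes. This is why the exponent must be $\equiv 1 \pmod 4$, and why the admissible range for the last group is $0 \le u \le 5t$: each point of that group will consume one parallel class.

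Given such an ingredient with parallel classes $R_1,\dots,R_{5t}$, I would introduce a fresh set $W=\{\infty_1,\dots,\infty_u\}$ of $u$ points, declare $W$ to be a new group, and for each $j$ with $1 \le j \le u$ add the point $\infty_j$ to every block of the class $R_j$, promoting those blocks from size $5$ to size $6$; the blocks of the $5t-u$ unused classes are kept unchanged. The resulting family has block sizes in $\{5,6\}$ and group type $5^{4t+1}u$, so it remains only to check the pair-balance condition.

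The verification is immediate from resolvability. A pair of ordinary points is covered once, as in the original GDD. A pair $\{\infty_j,x\}$ with $x$ ordinary is covered exactly once, because $R_j$ partitions the ordinary points and $\infty_j$ is attached to each of its blocks. A pair $\{\infty_i,\infty_j\}$ with $i \ne j$ is covered zero times, since $\infty_i$ and $\infty_j$ occur only in the disjoint classes $R_i$ and $R_j$; as $\infty_i,\infty_j$ lie in the common group $W$, this is exactly what is wanted. Hence the construction yields a $\{5,6\}$-GDD of type $5^{4t+1}u$ for every $u$ in the claimed range.

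The real content, and the main obstacle, is the existence of the resolvable $\{5\}$-GDD of type $5^{4t+1}$ itself; this is where the excluded values $t\in\{2,17,23,32\}$ (that is, $4t+1\in\{9,69,93,129\}$) enter, being precisely the orders for which no such resolvable design is available. For all other $t$ I would supply these ingredients by the usual two-pronged attack: explicit difference-family constructions over suitable groups for the small and base orders, followed by Wilson-type inflation and filling constructions that build larger resolvable GDDs from smaller ones together with resolvable transversal designs. Carrying this program out in full is exactly the work of \cite{ES}, which is why the statement is quoted rather than reproved here.
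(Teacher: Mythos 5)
Your construction is correct, but note that the paper offers no proof of this statement at all: it is quoted verbatim from Schuster \cite{ES} as a known ingredient, so there is nothing internal to compare against. That said, your argument is the standard (and, as far as the source goes, the actual) route: a resolvable $\{5\}$-GDD of type $5^{4t+1}$ is the same thing as a resolvable $2$-$(20t+5,5,1)$ design with one parallel class converted into groups, it has exactly $r=5t$ remaining parallel classes, and adjoining one new point to every block of each of $u\le 5t$ chosen classes (the new points forming a group of size $u$) yields the desired $\{5,6\}$-GDD; your pair-balance check is complete and the bound $u\le 5t$ and the congruence on the exponent fall out exactly as you say. Your identification of the exceptions is also on target: $t\in\{2,17,23,32\}$ corresponds to $20t+5\in\{45,345,465,645\}$, which are precisely the orders for which the existence of a resolvable $2$-$(v,5,1)$ design was still open. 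The only honest caveat is the one you already flag yourself: the entire mathematical weight sits in the existence of the resolvable ingredient, which you (like the paper) must ultimately delegate to the literature, so your writeup is a correct reduction rather than a self-contained proof.
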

%
\section{Applying Construction }  
\setcounter{theorem}{0}\setcounter{preproposition}{0}
\setcounter{precorollary}{0} \setcounter{prelemma}{0}
\setcounter{preexample}{0}
 In this section we prove the following  main Theorem.
 \begin{pretheo}\label{10}
For  $v\equiv 11~(\mathrm{\bmod} ~20)$, $v\neq 11,31,71$ , $I_D(v)=J_D(v)$.
\end{pretheo}
\begin{proof}{
According to  Proposition \ref{14} there exists a $\{ 5,6\}$-$GDD$
of type ${5^{{\textstyle{{v - 1} \over {10}}}}}$  for each of values
$v\equiv 11~(\bmod ~20)$ and  $v\neq 11,31,71$. We may apply
Construction and Lemma \ref{4} and  the fact that
$I_G(10)=\{0,1,...,6,8\}$,~$I_G(12)=\{0,1,...,10,12\}$ and
$\{0,1,2,3,11\}\subseteq I_D(11)$, we can deduce $I_D(v)=J_D(v)$,
for $v\equiv ~11~(\bmod~ 20)$, $v\neq 11,31,71$.
}\end{proof}
\begin{pretheo}\label{11}
For $v\equiv 1 ~or ~5~(\bmod ~20)$,~$v\geq 85$,~$I_D(v)=J_D(v)$.
\end{pretheo}
\begin{proof}
{For $v\geq 85,~v\ne 125$, according to  Propositions \ref{6}
 there exists a $PBD(v,\{5,21^\star\})$
. If we replace the block of size $21$ with a $2$-$(21,5,1)$DD and
 put a
$2$-$(5,5,1)$DD on each block of size $5$, then for all $v\equiv 1~
or~ 5~(\bmod~ 20)$, $v\geq 85,~v\ne 125$, we can obtain a
$2$-$(v,5,1)$DD. So according to  Propositions \ref{5} there exists
a $PBD(125,\{5,25^\star\})$ which if we replace the block of size
$25$ with a $2$-$(25,5,1)$DD and
 put a
$2$-$(5,5,1)$DD on each block of size $5$, then we  can obtain a
$2$-$(125,5,1)$DD. According to  the fact that $I_D(21)=J_D(21)$,
$I_D(25)=J_D(25)$ and $I_D(5)=J_D(5)$, therefore  we can deduce
$I_D(v)=J_D(v)$.
}\end{proof}
\begin{pretheo}\label{13}
For $v\equiv 15~(\bmod~ 20)$, $v \ge 175$ and $v\ne 215,235,335$,
$~I_D(v)=J_D(v)$.
\end{pretheo}
\begin{proof}{
According to   Proposition \ref{7}, for each of aforesaid values
there exists  a $\{5,6\}$-$GDD$ of  order  $~\frac{{v -
1}}{2}$ with  groups that their size belong to the set  $N$  which $N \subseteq (M \cap Z_{{\textstyle{{v - 3} \over
2}}})$.
 Now according to Construction,~Lemma \ref{4},~Theorems \ref{10} and \ref{11} and other results that we obtain in Appendix  (the fact that $I_{G}(10)=\{0,1,...,6,8\}$,~$I_{G}(12)=\{0,1,...,10,12\}$,
~$\{0,1,2,3,11\} \subseteq I_D(11)$,~$I_D(2n+1)=J_D(2n+1)$ for $n\in N$),~ we can
deduce $I_D(v)=J_D(v)$.
}\end{proof}
{\bf Main Theorem.}\\
 For $v \equiv 1,5~(\bmod ~10)$,~$v\ne
11,15$,~$I_D(v)=J_D(v)$  and $\{0,1,2,3,11\} \subseteq I_D(11)$.
\begin{proof}{
According to   Theorems \ref{10},\ \ref{11} and \ref{13}~ for ~$v
\equiv 1,5~(\bmod ~10)$,~$v\ne\{
31,35,41,45,55,61,65,71,\\75,81,95,115,135,215,235,335\}$,~$I_D(v)=J_D(v)$.
Theorem for  the remaining values of $v$ are proved in the
Appendix.}
\end{proof}
%
 \textbf{Acknowledgement:}
The authors wish to thank the referee who helped us to improve presentation
of this manuscript.
%

\section{Appendix}
In this section, we construct~ $2-(v,5,1)DDs$~ for  remain values
$v=31,35,41,45,55,61,65,71,75,81,$ \\
$95,115,135,155,215,235,335$,
and  by  method of  trade off we can obtain their intersection spectrum.\\\\
${\bullet ~~~I_D(31)=J_D(31)}$.\\
Let $D$ be a $2$-$(31,5,1)$DD on the set $\{0,1,...,30\}$, with base
blocks $(20,10,5,9,18)~ (6,12,24,3,17)$ \\ $ (1,16,8,2,4)~ (\bmod~
31)$. (This directed design is obtained from a $2$-$(31,5,2)$BD with
a suitable ordering on its base blocks \cite{Diff}). This
$2$-$(31,5,1)$DD contains $31$  disjoint directed  trades of volume $2$ and
$31$ disjoint directed  trades of volume $3$.

Some  $T(2,5,31)$ directed trades exist in $D$:
$$
 \small
\begin{array}{ccc}
        &    T'_i:(1+i,16+i,8+i,2+i,4+i)(2+i,8+i,20+i,30+i,13+i) &  \\
  T_i &    T''_i:(1+i,16+i,2+i,8+i,4+i)(8+i,2+i,20+i,30+i,13+i)&  \\
        &                 i=1,...,31; &
\end{array}$$
$$ \small
\begin{array}{ccc}
   & R'_i:(1+i,16+i,8+i,2+i,4+i)(2+i,8+i,20+i,30+i,13+i) & \\
  & (30+i,20+i,15+i,19+i,28+i) &  \\
 R_i &  &  \\
   & R''_i:(1+i,16+i,2+i,8+i,4+i)(8+i,2+i,30+i,20+i,13+i) &  \\
   & (20+i,30+i,15+i,19+i,28+i) &  \\
   & i=1,...,31. &  \\
\end{array}
$$
%
 Let $D_1=D+R_1$ so we have:~$\left| {D \cap D_1 } \right| = 90$.
\vspace{.1cm}

Odd intersections:
\vspace{.1cm}
$$
\begin{array}{llrrr}
 {\left| {D \cap (D + \sum\limits_{i = 1}^I {{T_i}} )} \right| = 93 - 2I} & {} & {} & {1 \le I \le 31;}
  & {}  \\
 {} & {} & {} & {} & {}  \\
 {\left| {D \cap (D + \sum\limits_{i = 1}^{20} {{R_i} + } \sum\limits_{i = 21}^I {{T_i}} )} \right| =
 93 - (60 + 2(I - 20))} & {} & {} & {21 \le I \le 31;} & {}  \\
  {} & {} & {} & {} & {}  \\
  {\left| {D \cap (D + \sum\limits_{i = 1}^{28} {{R_i} + } \sum\limits_{i = 28}^I {{T_i}} )} \right| =
   93 - (84 + 2(I - 28))} & {} & {} & {28 \le I \le 31;} & {}  \\
  {} & {} & {} & {} & {}  \\
  {\left| {D \cap (D + \sum\limits_{i = 1}^{30} {{R_i} + } {T_{31}} )} \right| =
  1.} & {} & {} & {} & {}
\end{array}$$
Even intersections:
\vspace{.3cm} $$
\begin{array}{llrrr}
 {\left| {D \cap (D_1 + \sum\limits_{i = 2}^I {{T_i}} )} \right| =90 - 2(I-1) } & {}
  & {} & {2 \le I \le 31;} & {}  \\
 {} & {} & {} & {} & {}  \\
 {\left| {D \cap (D + \sum\limits_{i = 1}^{21} {{R_i} + } \sum\limits_{i = 21}^I {{T_i}} )} \right| =
  93 - (63 + 2(I - 20))} & {} & {} & {21 \le I \le 31;} & {}  \\
 {} & {} & {} & {} & {}  \\
 {\left| {D \cap (D + \sum\limits_{i = 1}^{27} {{R_i} + } \sum\limits_{i = 27}^I {{T_i}} )} \right| =
  93 - (81 + 2(I - 26))} & {} & {} & {27 \le I \le 31;} & {}  \\
 {} & {} & {} & {} & {}  \\
 {\left| {D \cap (D + \sum\limits_{i = 1}^{29} {R_i  + \sum\limits_{i
= 29}^I {T_i } } )} \right| =93 - (87 + 2(I - 28))} & { }&{}&{29
\le
I \le 31;}&{} \\
 {} & {} & {} & {} & {}  \\
 {\left| {D \cap (D + \sum\limits_{i = 1}^{31} {R_i } )} \right| =0. } & {} & {} & {} & {}
\end{array}$$
\vspace{.2cm}
${\bullet ~~~ I_D(35)=J_D(35)}$.\\
Let $D$ be a  $2$-$(35,5,1)$DD on the set $\{0,1,...,34\}$, is obtained by developing  the following
base blocks  under
the group generated by $(0)(1,2,...,17)(18,...,34)$,
$$
\begin{array}{lll}
  (31,2,9,30,34) & (7,29,34,26,1) & (18,25,1,31,5) \\
  (4,6,1,9,2) & (24,14,3,26,34) & (19,13,20,5,31) \\
  (20,2,0,1,18). &  &
\end{array}
$$
(This directed design is obtained from a super simple
$2$-$(35,5,2)$BD with a suitable ordering on its base blocks
\cite{GKL}). In design $D$ there exist $59$  disjoint directed
trades of volume $2$ and at least a directed trade of volume $3$.

 Some small $TD(2,5,35)$ directed trades exist in $D$:
\vspace{.1cm}
 $$
\begin{array}{lcr}
   & T'_{1i}:(31+i,2+i,9+i,30+i,34+i)(4+i,6+i,1+i,9+i,2+i) &  \\
  T_{1i} &  & ~~1\leq i \leq 17; \\
   & T''_{1i}:(31+i,9+i,2+i,30+i,34+i)(4+i,6+i,1+i,2+i,9+i) &
\end{array}
$$
\vspace{.1cm}
 $$
\begin{array}{lcr}
   & T'_{2i}:(7+i,29+i,34+i,26+i,1+i)(24+i,14+i,3+i,26+i,34+i) &  \\
  T_{2i} &  & 1\leq i \leq 17; \\
   & T''_{2i}:(7+i,29+i,26+i,34+i,1+i)(24+i,14+i,3+i,34+i,26+i) &
\end{array}
$$
$$
\begin{array}{lcr}
   & T'_{3i}:(18+i,25+i,1+i,31+i,5+i)(19+i,13+i,20+i,5+i,31+i) &  \\
  T_{3i} &  & 1\leq i \leq 17; \\
   & T''_{3i}:(18+i,25+i,1+i,5+i,31+i)(19+i,13+i,20+i,31+i,5+i) &
\end{array}
$$
\vspace{.1cm}
$$
\begin{array}{lcr}
   & T'_{4i}:(20+i,2+i,0,1+i,18+i)(21+i,3+i,0,2+i,19+i) &  \\
  T_{4i} &  & i=2I,~1\leq I \leq 8. \\
   & T''_{4i}:(20+i,0,2+i,1+i,18+i)(21+i,3+i,2+i,0,19+i) &
\end{array}
$$
$$
\begin{array}{lcr}
   & K':(31,2,9,30,34)(4,6,1,9,2)(9,31,19,28,3) &  \\
  K &  &  \\
   & K'':(9,31,2,30,34)(4,6,1,2,9)(31,9,19,28,3) &  \\
\end{array}
$$
\vspace{.1cm}
 Let $\alpha$ be the following permutation on the set
$\{0,1,...,34\}$,
$$
\alpha  = \left\{ {\begin{array}{*{20}c}
   {(m,m + 17)} & {} & {1 \le m \le 17}  \\
   {} & {} & {}  \\
   {(m,m - 17)} & {} & {18 \le m \le 34}
\end{array}} \right.$$
so we have $\left| {D \cap D\alpha } \right| = 0$. Therefore by
help of  directed trades that there exist in design $D$ we can obtain
the set of  intersections $2$-$(35,5,1)$DD and can  deduce $I_D(35)=J_D(35)$.\\

$\bullet~~~I_D(41)=J_D(41)$.\\
 Let $D$  be a  $2-(41,5,1)DD$ on
the set $V=\{0,1,...,40\}$, with the below  base blocks \cite{Diff}.
$$\begin{array}{ll}
   (1,37,18,16,10) & (2,33,36,32,20) \\
   (13,18,37,39,4) & (33,2,40,3,15)
 \end{array}$$
   In  design $D$ there exist $82$  disjoint directed  trades of volume $2$ and at least a directed trade $K$ of
volume $3$.
\vspace{.1cm}
 $$
 \small
\begin{array}{lcc}
   & T'_{1i}:(1+i,37+i,18+i,16+i,10+i)(13+i,18+i,37+i,39+i,4+i) &  \\
  T_{1i} &  &  \\
   & T_{1i}'':(1+i,18+i,37+i,16+i,10+i)(13+i,37+i,18+i,39+i,4+i) &  \\
   & 0\leq i \leq 40; &  \\
\end{array}
$$
\vspace{.1cm}
 $$
 \small
\begin{array}{lcc}
   & T'_{2i}:(2+i,33+i,36+i,32+i,20+i)(33+i,2+i,40+i,3+i,15+i) &  \\
  T_{2i} &  &  \\
   & T_{2i}'':(33+i,2+i,36+i,32+i,20+i)(2+i,33+i,40+i,3+i,15+i) &  \\
   & 0\leq i \leq 40; &
\end{array}
$$
\vspace{.1cm}
 $$
 \small
\begin{array}{lcc}
   & K':(1,37,18,16,10)(13,18,37,39,4)(25,30,8,10,16) &  \\
  K &  &  \\
   & K'':(1,18,37,10,16)(13,37,18,39,4)(25,30,8,16,10) &
\end{array}
$$
 Therefore by help of the above  directed trades,  we can
obtain
the set of  intersections $2$-$(41,5,1)$DD and  can   deduce $I_D(41)=J_D(41)$.\\

${\bullet~~~I_D(45)=J_D(45)}$.\\
Let $D$ be a  $2$-$(45,5,1)$DD on the set $\{0,1\} \times Z_{22}
\cup \{\infty\}$,  is obtained by developing the second
coordinate of following $9$ base blocks $(\bmod ~22)$.(This directed
design is obtained from a super simple $2$-$(45,5,2)$BD with a
suitable ordering on its base blocks \cite{GKL} )
$$ \small
\begin{array}{ll}
{((0,4),(1,9),(1,8),(1,0),(1,2))}&{((1,4),(0,0),(1,13),(1,8),(1,9))} \\
   {((1,21),(0,18),(0,21),(0,6),(0,20))}&{((0,0),(1,15),(0,18),(1,21),(0,8))} \\
   {((0,0),(0,16),(1,11),(0,9),(0,13))} & {((0,11),(1,1),(1,21),(0,9),(1,11))}\\
   {((0,6),(1,0),\infty ,(1,7),(0,1))} & {((1,0),(0,6),(0,11),(1,3),(0,17))} \\
   {((1,10),(1,18),(0,0),(0,1),(1,7))} &
\end{array}$$
Some small $T(2,5,45)$ directed trades exist in $D$:

\vspace{.1cm}
 $ \small
\begin{array}{lrcc}
   & {T'_{1i}:}&{((1,4+i),(0,i),(1,13+i),(1,8+i),(1,9+i))} &\\
  & &{((0,4+i),(1,9+i),(1,8+i),(1,i),(1,2+i))} &  \\
  {T_{1i}} &  & &{} \\
  & {T''_{1i}}:& {((1,4+i),(0,i),(1,13+i),(1,9+i),(1,8+i))}& \\
   & &{((0,4+i),(1,8+i),(1,9+i),(1,i),(1,2+i))}  & \\
   & & 1\leq i \leq 22; & \\
   &  &
\end{array}
$
 \vspace{.1cm}

 $ \small
\begin{array}{lrcc}
   & {T'_{2i}:} & {((0,i),(1,15+i),(0,18+i),(1,21+i),(0,8+i))} &\\
   & & {((1,21+i),(0,18+i),(0,21+i),(0,6+i),(0,20+i))} & \\
  {T_{2i}} &  & &{} \\
   & {T''_{2i}:} & {((0,i),(1,15+i),(1,21+i),(0,18+i),(0,8+i))}&\\
   & & {((0,18+i),(1,21+i),(0,21+i),(0,6+i),(0,20+i))} &  \\
   & & {1\leq i \leq 22}; &
\end{array}
$
 \vspace{.1cm}

 $ \small
\begin{array}{lrcc}
   & {T'_{3i}:}& {((0,i),(0,16+i),(1,11+i),(0,9+i),(0,13+i))} &\\
   & &{((0,11+i),(1,1+i),(1,21+i),(0,9+i),(1,11+i))}   & \\
  {T_{3i}} &  & &{} \\
   & {T''_{3i}:} &{((0,i),(0,16+i),(0,9+i),(1,11+i),(0,13+i))}&\\
   & & {((0,11+i),(1,1+i),(1,21+i),(1,11+i),(0,9+i))}   & \\
   & & 1\leq i \leq 22; &
\end{array}
$
 \vspace{.1cm}

 $ \small
\begin{array}{lrcc}
  & &{((0,6+i),(1,i),\infty ,(1,7+i),(0,1+i))}&\\
  &R'_i:&{((1,i),(0,6+i),(0,11+i),(1,3+i),(0,17+i))} &  \\
   & & {((1,10+i),(1,18+i),(0,i),(0,1+i),(1,7+i))} &  \\
  R_i & & &  \\
  & & {((1,i),(0,6+i),\infty ,(0,1+i),(1,7+i))}&\\
 &R''_i: & {((0,6+i),(1,i),(0,11+i),(1,3+i),(0,17+i))} &  \\
  & & {((1,10+i),(1,18+i),(0,i),(1,7+i),(0,1+i))}&  \\
   & & 1\leq i \leq 22. &
\end{array}
$

\vspace{.1cm}
 Therefore by help of the above directed  trades that there exist  in  directed design $D$, we can
obtain the set of  intersections design $D$ and  we  can   deduce $I_D(45)=J_D(45)$.\\\\
${\bullet~~~I_D(55)=J_D(55)}$.\\
Let $D$ be a  $2$-$(55,5,1)$DD on the set  $V = {Z_{54}} \cup
\left\{ \infty  \right\}$, is obtained by developing the below
blocks
 $+2~ (\bmod~ 54)$(This directed design is obtained from a super
simple $2$-$(55,5,2)$BD with a suitable ordering on  its base
blocks \cite{AB} ).
 $$
 \small
\begin{array}{llllll}
   {(27,16,\infty ,38,25)} & {(2,16,7,31,22)} & {(31,27,0,37,24)} & {(3,49,11,23,25)}&  \\
   {(0,39,25,38,19)} & {(25,10,3,28,7)} & {(0,27,4,53,17)} & {(0,23,11,30,40)} & \\
   {(1,36,38,39,34)}&{(0,7,28,16,8)}&{(32,10,22,53,4)} &&\\
\end{array}$$
 Some small $T(2,5,55)$ directed trades exist in $D$:
\vspace{.1cm}

 $
 \small
\begin{array}{lcc}
   {} & {{{R'}_{1i}}:(27 + 2i,16 + 2i,\infty ,38 + 2i,25 + 2i)(2i,39+2i,25+2i,38+2i,19+2i)} & {}  \\
   {} & {(1+2i,36+2i,38+2i,39+2i,34+2i)} & {}  \\
   {{R_{1i}}} & {} & {}  \\
   {} & {{{R'}_{1i}}:(27 + 2i,16 + 2i,\infty ,25 + 2i,38 + 2i)(2i,38+2i,39+2i,25+2i,19+2i)} & {}  \\
   {} & {(1+2i,36+2i,39+2i,38+2i,34+2i)} & {}  \\
    {} & {0 \le i \le 26}; & {}
\end{array}$
\vspace{.1cm}

$
 \small
\begin{array}{lcc}
   {} & {{{R'}_{2i}}:(2 + 2i,16 + 2i,7 + 2i,31 + 2i,22 + 2i)(25 + 2i,10 + 2i,3 + 2i,28 + 2i,7 + 2i)} & {}  \\
   {} & {(2i,7 + 2i,28 + 2i,16 + 2i,8 + 2i)} & {}  \\
   {{R_{2i}}} & {} & {}  \\
   {} & {R''{:_{2i}}(2 + 2i,7 + 2i,16 + 2i,31 + 2i,22 + 2i)(25 + 2i,10 + 2i,3 + 2i,7 + 2i,28 + 2i)} & {}  \\
   {} & {(2i,28 + 2i,7 + 2i,16 + 2i,8 + 2i)} & {}  \\
    {} & {0 \le i \le 26}; & {}
\end{array}$
\vspace{.1cm}

$
 \small
\begin{array}{lcc}
   {} & {{{R'}_{3i}}:(31 + 2i,27 + 2i,2i,37 + 2i,24 + 2i)(2i,27 + 2i,4 + 2i,53 + 2i,17 + 2i)} & {}  \\
   {} & {(32 + 2i,10 + 2i,22 + 2i,53 + 2i,4 + 2i)} & {}  \\
   {{R_{3i}}} & {} & {}  \\
   {} & {{{R''}_{3i}}:(31 + 2i,2i,27 + 2i,37 + 2i,24 + 2i)(27 + 2i,2i,53 + 2i,4 + 2i,17 + 2i)} & {}  \\
   {} & {(32 + 2i,10 + 2i,22 + 2i,4 + 2i,53 + 2i)} & {}  \\
    {} & {0 \le i \le 26}; & {}
\end{array}
$
\vspace{.1cm}
 $$
 \small
\begin{array}{lcc}
   {} & {{{T'}_i}:(3 + 2i,49 + 2i,11 + 2i,23 + 2i,25 + 2i)(2i,23 + 2i,11 + 2i,30 + 2i,40 + 2i)} & {}  \\
   T_i & {} & {}  \\
   {} & {{{T''}_i}:(3 + 2i,49 + 2i,23 + 2i,11 + 2i,25 + 2i)(2i,11 + 2i,23 + 2i,30 + 2i,40 + 2i)} & {}  \\
   {} & {0 \le i \le 26}. & {}
\end{array}
$$
\vspace{.1cm}
 Therefore by help of directed trades that there exist  in directed  design $D$, we can
obtain
the set of  intersections design $D$ and  we  can   deduce $I_D(55)=J_D(55)$.\\\\
${\bullet ~~~ I_D(61)=J_D(61)}$.\\
 Let $D$ be a  $2$-$(61,5,1)$DD on
the set $V=\{0,1,...,60\}$, with  the below base blocks \cite{Diff}.
$$\begin{array}{ccc}
  (0,4,23,9,45) & (0,55,37,44,29) & (0,60,48,58,27) \\
 (4,0,42,56,20) & (55,0,18,11,26) & (60,0,12,2,33)
\end{array}
$$
In directed  design $D$ there exist $183$  disjoint directed  trades of volume $2$ and  least a directed trade $K$ of
volume $3$.

$ \small
\begin{array}{lrc}
   {} & {T'_{1i}:} & {(i,4+i,23+i,9+i,45+i)(4+i,i,42+i,56+i,20+i)}  \\
   T_{1i} & {} & {}  \\
   {} & {T''_{1i}:} & {(4+i,i,23+i,9+i,45+i)(i,4+i,42+i,56+i,20+i)}  \\
    & {} & {0\leq i \leq 60};
\end{array}$
\vspace{.1cm}

$ \small
\begin{array}{lrc}
   {} & {T'_{2i}:} & {(i,55+i,37+i,44+i,29+i)(55+i,i,18+i,11+i,26+i)}  \\
   T_{2i} & {} & {}  \\
   {} & {T''_{2i}:} & {(55+i,i,37+i,44+i,29+i)(i,55+i,18+i,11+i,26+i)}  \\
    & {} & {0\leq i \leq 60};
\end{array}$
\vspace{.1cm}

$ \small
\begin{array}{lrc}
   {} & {T_{3i}':} & {(i,60+i,48+i,58+i,27+i)(60+i,i,12+i,2+i,33+i)}  \\
   T_{3i} & {} & {}  \\
   {} & {T_{3i}'':} & {(60+i,i,48+i,58+i,27+i)(i,60+i,12+i,2+i,33+i)}  \\
    & {} & {0\leq i \leq 60};
\end{array}$
\vspace{.1cm}

$ \small
\begin{array}{lrc}
   {} & {K':} &  {(0,60,48,58,27)(60,0,12,2,33)(24,25,37,27,58)} \\
   K & {} & {}  \\
   {} & {K'':} &  {(60,0,48,27,58)(0,60,12,2,33)(24,25,37,58,27)}
\end{array}$

Therefore by help of  the above  three groups of directed  trades, we can
obtain
the  intersections set  $2$-$(61,5,1)$DD and  can   deduce $I_D(61)=J_D(61)$.\\\\
${\bullet ~~~ I_D(65)=J_D(65)}$.\\
 Let $D$ be a   $2$-$(65,5,1)$DD on
the set $\mathrm{V}=\mathrm{Z}_{54}\cup \{{\infty_0,...,\infty_{10}}\}$, that is obtained by developing the below base blocks  $+2~(\mathrm{mod~54})$, that  for $z=0,3$ replace $\infty_z$ by $\infty_{z+x}~(x=1,2)$ when adding any value $\equiv 2x~(\mathrm{mod~6})$ to a base block. Finally form a $2$-$(11,5,1)$DD on the set $\{\infty_0,...\infty_{10}\}$.
$$
  \begin{array}{lllll}
     (31,0,35,41,20)& (3,1,41,15,35) & (4,9,2,\infty_3,27)&(\infty_0,29,1,28,30) &(41,8,\infty_6,29,0) \\
    (0,31,\infty_3,8,12) & (1,3,52,47,\infty_0) & (11,\infty_3,19,28,35)& (12,25,\infty_7,28,1) &(47,12,\infty_9,0,29) \\
    (0,14,\infty_0,32,15) & (20,14,44,0,53) & (0,44,28,50,47) & (3,8,\infty_{10},53,44) & (31,42,\infty_8,53,14)
  \end{array}$$
(This directed design is obtained from a super
simple $2$-$(65,5,2)$BD with a suitable ordering on  its base
blocks  \cite{AB}). In  design $D$ there exist $135$  disjoint directed  trades of volume $2$ and $27$ disjoint directed trades of volume $5$  and least a directed trade of
volume $3$.
$$\begin{array}{lll}
   & T'_{1i}:(31+2i,2i,35+2i,41+2i,20+2i)(2i,31+2i,\infty_3,8+2i,12+2i) &\\
  T_{1i} & & \\
   & T''_{1i}:(2i,31+2i,35+2i,41+2i,20+2i)(31+2i,2i,\infty_3,8+2i,12+2i)& \\
   & 1 \leq i \leq 27;&
\end{array}
 $$
$$\begin{array}{lll}
   & T'_{2i}:(3+2i,1+2i,41+2i,15+2i,35+2i)(1+2i,3+2i,52+2i,47+2i,\infty_{0+x}) &\\
  T_{2i} & & \\
   & T''_{2i}:(1+2i,3+2i,41+2i,15+2i,35+2i)(3+2i,1+2i,52+2i,47+2i,\infty_{0+x})& \\
   & 1 \leq i \leq 27;&
\end{array}
 $$
$$\begin{array}{lll}
   & T'_{3i}:(30+2i,35+2i,28+2i,\infty_{3+x},53+2i)(11+2i,\infty_{3+x},19+2i,28+2i,35+2i) &\\
  T_{3i} & & \\
   & T''_{3i}:(30+2i,28+2i,35+2i,\infty_{3+x},53+2i)(11+2i,\infty_{3+x},19+2i,35+2i,28+2i)& \\
   & 1 \leq i \leq 27;&
\end{array}
 $$
$$\begin{array}{lll}
   & T'_{4i}:(\infty_{0+x},29+2i,1+2i,28+2i,30)(12+2i,25+2i,\infty_7,28+2i,1+2i) &\\
  T_{4i} & & \\
   & T''_{4i}:(\infty_{0+x},29+2i,28+2i,1+2i,30)(12+2i,25+2i,\infty_7,1+2i,28+2i)& \\
   & 1 \leq i \leq 27;&
\end{array}
 $$
$$\begin{array}{lll}
   & T'_{5i}:(41+2i,8+2i,\infty_6,29+2i,2i)(47+2i,12+2i,\infty_9,2i,29+2i) &\\
  T_{5i} & & \\
   & T''_{5i}:(41+2i,8+2i,\infty_6,2i,29+2i)(47+2i,12+2i,\infty_9,29+2i,2i)& \\
   & 1 \leq i \leq 27;&
\end{array}
 $$
$$\begin{array}{lll}
   &(2i,14+2i,\infty_{0+x},32+2i,15+2i) (20+2i,14+2i,44+2i,2i,53+2i)  &\\
  & R'_{i}:(2i,44+2i,28+2i,50+2i,47+2i) (3+2i,8+2i,\infty_{10},53+2i,44+2i)  \\
  &(31+2i,42+2i,\infty_8,53+2i,14+2i)& \\
  R_{i} &  & \\
   &(14+2i,2i,\infty_{0+x},32+2i,15+2i) (20+2i,2i,53+2i,14+2i,44+2i)  &\\
  & R'_{i}:(44+2i,2i,28+2i,50+2i,47+2i) (3+2i,8+2i,\infty_{10},44+2i,53+2i)  \\
  &(31+2i,42+2i,\infty_8,14+2i,53+2i)& \\
   & 1 \leq i \leq 27.&
\end{array}
 $$
\vspace{.1cm}
$$\begin{array}{lll}
   & K':(0,14,\infty_0,32,15)  (20,14,44,0,53)  (0,44,28,50,47) &  \\
  K &  &  \\
   & K'':(14,0,\infty_0,32,15)  (20,0,14,44,53)  (44,0,28,50,47) &
\end{array}$$
Therefore according to  the six groups of disjoint directed trades
that there exist in design $D$, we can obtain
the intersection set of design  $2$-$(65,5,1)$DD and   can   deduce $I_D(65)=J_D(65)$.\\
\\
${\bullet ~~~ I_D(71)=J_D(71)}$.\\
Let $D$ be a  $2$-$(71,5,1)$DD on the set $\{0,1,...,70\}$, with the
following base blocks $(\bmod ~71)$.
$$
\begin{array}{ccc}
  (28,45,14,15,20) & (20,4,15,29,3) & (65,63,4,20,54) \\
  (58,40,6,8,30)&(50,30,8,11,26) &  \\
  (43,37,2,50,10)&(50,2,35,6,70) & \\
\end{array}
$$
 (This directed
design is obtained from a $2$-$(71,5,2)$BD with a suitable ordering
on its base blocks \cite{Diff}). In  design $D$, there exist $71$ disjoint directed trades of volume $3$ and  $142$
disjoint directed trades of volume $2$:
%
\vspace{.4cm}

$
 \small
\begin{array}{rc}
   {} & {R'_i :}  {(28+i,45+i,14+i,15+i,20+i) (20+i,4+i,15+i,29+i,3+i)}\\
      &           (65+i,63+i,4+i,20+i,54+i)\\
   {R_{i} } & {}   \\
   {} & {R''_i :}  {(28+i,45+i,14+i,20+i,15+i) (4+i,15+i,20+i,29+i,3+i)}  \\
      &           (65+i,63+i,20+i,4+i,54+i)\\
   {} &  {0 \le i \le 70};
\end{array}
$
\vspace{.3cm}

$
 \small
\begin{array}{rc}
   {} & {T'_{1i} :}  {(58+i,40+i,6+i,8+i,30+i)(50+i,30+i,8+i,11+i,26+i)}  \\
   {T_{1i} } & {}   \\
   {} & {T''_{1i} :} {(58+i,40+i,6+i,30+i,8+i)(50+i,8+i,30+i,11+i,26+i)}  \\
   {} &  { 0 \le i \le 70};
\end{array}
$
\vspace{.3cm}

 $
 \small
\begin{array}{rc}
   {} & {T'_{2i} :}  {(43+i,37+i,2+i,50+i,10+i)(50+i,2+i,35+i,6+i,70+i)} \\
   {T_{2i} } & {}   \\
   {} & {T''_{2i} :}  {(43+i,37+i,50+i,2+i,10+i)(2+i,50+i,35+i,6+i,70+i)}\\
   {} &  {0 \le i \le 70}.
\end{array}
$
\vspace{.1cm}

 Therefore by help of the above three groups of directed  trades, we can
obtain
the  intersection  set  of design $D$ and  deduce $I_D(71)=J_D(71)$.\\\\
${\bullet ~~~ I_D(75)=J_D(75)}$.\\
Let D be a $2$-$(75, 5, 1)$DD on the set $V = Z_{74}  \cup \{
\infty\}$, is obtained by developing the below base  blocks $+2$
$(\bmod~74)$. (This directed design is obtained from a super simple
$2$-$(75,5,2)$BD with a suitable ordering on its base blocks
\cite{KCRW} ).
\vspace{.3cm}

$
 \small
\begin{array}{ccccc}
   {(6,0,47,44,3)} & {(40,65,26,3,44)} & {(64,29,52,63,0)}& {(2,15,\infty ,59,0)} & {(22,15,2,42,34)} \\
   {(49,32,0,63,6)} & {(48,27,23,17,65)} & {(17,23,5,60,56)} & {(25,3,27,43,1)} & {(0,59,50,69,58)}  \\
   {(27,48,0,30,2)}& {(46,1,6,61,70)} & {(60,7,61,0,65)}&{(22,58,45,67,0)}& {(1,2,37,9,29)}
\end{array}$
\vspace{.3cm}

This $2$-$(75, 5, 1)$DD contain $111$ disjoint directed  trades of volume $2$ and $111$ disjoint directed
trades  of volume $3$.
 Now list these $T(2, 5, 75)$ directed trades:
\vspace{.3cm}

$
 \small
\begin{array}{llc}
   {} & {T'_{1i} :} & {(6 + 2i,2i,47 + 2i,44 + 2i,3 + 2i)(40 + 2i,65 + 2i,26 + 2i,3 + 2i,44 + 2i)}  \\
   {T_{1i} } & {} & {}  \\
   {} & {T''_{1i} :} & {(6 + i2,2i,47 + 2i,3 + 2i,44 + 2i)(40 + 2i,65 + 2i,26 + 2i,44 + 2i,3 + 2i)}  \\
   {} &  & {0\le i \le 36};
\end{array}$
\vspace{.4cm}

$
 \small
\begin{array}{llc}
   {} & {T'_{2i} :} & {(64+2i,29+2i,52+2i,63+2i,2i)(49+2i,32+2i,2i,63+2i,6+2i)}  \\
   {T_{2i} } & {} & {}  \\
   {} & {T''_{2i} :} & {(64+2i,29+2i,52+2i,2i,63+2i)(49+2i,32+2i,63+2i,2i,6+2i)}  \\
   {} &  & {0\le i \le 36};
\end{array}$
\vspace{.4cm}

$
 \small
\begin{array}{llc}
   {} & {T'_{3i} :} & {(2 + 2i,15 + 2i,\infty ,59 + 2i,2i)(22 + 2i,15 + 2i,2 + 2i,42 + 2i,34 + 2i)}  \\
   {T_{3i} } & {} & {}  \\
   {} & {T''_{3i} :} & {(15 + 2i,2 + 2i,\infty ,59 + 2i,2i)(22 + 2i,2 + 2i,15 + 2i,42 + 2i,34 + 2i)}  \\
   {} &  & {0\le i \le 36};
\end{array}$
\vspace{.4cm}

$
 \small
\begin{array}{llc}
   {} & {R'_{1i} :} & {(27 + 2i,48 + 2i,2i,30 + 2i,2 + 2i)(48 + 2i,27 + 2i,23 + 2i,17 + 2i,65 + 2i)}  \\
   {} & {} & {(17 + 2i,23 + 2i,5 + 2i,60 + 2i,56 + 2i)}  \\
   {R_{1i} } & {} & {}  \\
   {} & {R''_{1i} :} & {(48 + 2i,27 + 2i,2i,30 + 2i,2 + 2i)(27 + 2i,48 + 2i,17 + 2i,23 + 2i,65 + 2i)}  \\
   {} & {} & {(23 + 2i,17 + 2i,5 + 2i,60 + 2i,56 + 2i)}  \\
   {} & {} & {0\le i \le 36};
\end{array}$
\vspace{.4cm}

$
 \small
\begin{array}{llc}
   {} & {R'_{2i} :} & {(22 + 2i,58 + 2i,45 + 2i,67 + 2i,2i)(67 + 2i,45 + 2i,69 + 2i,11 + 2i,43 + 2i)}  \\
   {} & {} & {(8 + 2i,67 + 2i,58 + 2i,3 + 2i,66 + 2i)}  \\
   {R_{2i} } & {} & {}  \\
   {} & {R''_{2i} :} & {(22 + 2i,67 + 2i,58 + 2i,45 + 2i,2i)(45 + 2i,67 + 2i,69 + 2i,11 + 2i,43 + 2i)}  \\
   {} & {} & {(8 + 2i,58 + 2i,67 + 2i,66 + 2i,3 + 2i)}  \\
   {} & {} & {0\le i \le 36};
\end{array}$
\vspace{.4cm}

$
 \small
\begin{array}{llc}
   {} & {R'_{3i} :} & {(1 + 2i,2 + 2i,37 + 2i,9 + 2i,29 + 2i)(72 + 2i,27 + 2i,32 + 2i,13 + 2i,22 + 2i)}  \\
   {} & {} & {(8 + 2i,29 + 2i,9 + 2i,22 + 2i,13 + 2i)}  \\
   {R_{3i} } & {} & {}  \\
   {} & {R''_{3i} :} & {(1 + 2i,2 + 2i,37 + 2i,29 + 2i,9 + 2i)(72 + 2i,27 + 2i,32 + 2i,22 + 2i,13 + 2i)}  \\
   {} & {} & {(8 + 2i,9 + 2i,29 + 2i,13 + 2i,22 + 2i)}  \\
   {} & {} & {0\le i \le 36}.
\end{array}$
\vspace{.1cm}

 Therefore by help of the above six groups of  directed trades  we can
obtain
the set of  intersections  of design $D$ and we can   deduce $I_D(75)=J_D(75)$.\\\\
${\bullet ~~~ I_D(81)=J_D(81)}$.\\
 Let $D$ be a   $2$-$(81,5,1)$DD on
the set $V=\{0,1,...,80\}$, with  the below  base blocks \cite{Diff}.
$$\begin{array}{llll}
     (0,1,12,5,26) & (0,2,40,10,64) & (0,3,47,18,53) & (0,9,32,48,68) \\
     (1,0,70,77,56) & (2,0,43,73,19) & (3,0,37,66,31) & (9,0,58,42,22)
\end{array}
$$
In directed  design $D$ there exist $324$   disjoint directed  trades of volume $2$ and  least a directed trade $K$ of
volume $3$.
\vspace{.4cm}
$$
\small
\begin{array}{llc}
   {} & {T'_{1i}:} & {(i,1+i,12+i,5+i,26+i)(1+i,i,70+i,77+i,56+i)}  \\
   T_{1i} & {} & {}  \\
   {} & {T''_{1i}:} & {(1+i,i,12+i,5+i,26+i)(i,1+i,70+i,77+i,56+i)}  \\
    & {} & {0\leq i \leq 80};
\end{array}$$
\vspace{.4cm}
$$
\small
\begin{array}{lrc}
   {} & {T'_{2i}:} & {(i,2+i,40+i,10+i,64+i)(2+i,i,43+i,73+i,19+i)}  \\
   T_{2i} & {} & {}  \\
   {} & {T''_{2i}:} & {(2+i,i,40+i,10+i,64+i)(i,2+i,43+i,73+i,19+i)}  \\
    & {} & {0\leq i \leq 80};
\end{array}$$
\vspace{.4cm}
$$
\small
\begin{array}{lrc}
   {} & {T'_{3i}:} & {(i,3+i,47+i,18+i,53+i)(3+i,i,37+i,66+i,31+i)}  \\
   T_{3i} & {} & {}  \\
   {} & {T''_{3i}:} &  {(3+i,i,47+i,18+i,53+i)(i,3+i,37+i,66+i,31+i)}  \\
    & {} & {0\leq i \leq 80};
\end{array}$$
\vspace{.4cm}
$$
\small
\begin{array}{lrc}
   {} & {T'_{4i}:} & {(i,9+i,32+i,48+i,68+i)(9+i,i,58+i,42+i,22+i)}  \\
   T_{4i} & {} & {}  \\
   {} & {T''_{4i}:} &  {(9+i,i,32+i,48+i,68+i)(i,9+i,58+i,42+i,22+i)}  \\
    & {} & {0\leq i \leq 80};
\end{array}$$
\vspace{.4cm}
$$
\small
\begin{array}{lrc}
   {} & {K':} & {(0,1,12,5,26)(1,0,70,77,56)(65,66,77,70,10)}  \\
   K & {} & {}  \\
   {} & {K'':} &  {(1,0,12,5,26)(0,1,77,70,56)(65,66,70,77,10)} \\
    & {} & {0\leq i \leq 80}.
\end{array}$$
\vspace{.1cm}
 Therefore by help of the  above four groups of directed  trades, we can
obtain
the set of  intersections of design  $D$ and   can   deduce $I_D(81)=J_D(81)$.\\\\
${\bullet ~~~ I_D(95)=J_D(95)}.$\\
Let $D$ be a  $2$-$(95,5,1)$DD~ on the set  $~V = {Z_{94}} \cup \{\infty \} $,  that is obtained
by developing the below base  blocks  $+1$ $(\bmod ~94)$. (This directed design is obtained from a super simple
$2$-$(95,5,2)$BD with a suitable ordering on its base blocks
\cite{KCRW} ).
$$
\begin{array}{llll}
  (4,79,61,1,74) & (42,47,44,52,37) & (41,66,45,32,91) & (3,72,40,52,9) \\
  (85,35,74,1,58) & (66,1,52,44,88) & (2,0,66,41,48) & (19,52,40,0,90) \\
  (20,37,48,54,21) & (61,81,55,19,83) & (48,57,55,81,1) & (81,57,5,84,17) \\
  (1,83,48,37,66) & (9,1,10,62,81) & (18,58,29,1,9) & (80,6,41,10,9) \\
  (0,53,49,68,84) & (6,36,62,49,0) & (53,0,\infty,15,14) &
\end{array}
$$
\vspace{.1cm}
This $2$-$(95,5,1)$DD contains $235$ disjoint directed trades
 of volume $2$ and  $141$ disjoint directed  trades of volume $3$.
\vspace{.1cm}
$$
\begin{array}{lcc}
   & T'_{1i}:(4+2i,79+2i,61+2i,1+2i,74+2i)(85+2i,35+2i,74+2i,1+2i,58+2i) &  \\
  T_{1i} &  &  \\
   & T''_{1i}:(4+2i,79+2i,61+2i,74+2i,1+2i)(85+2i,35+2i,1+2i,74+2i,58+2i) & \\
   &  1 \leq i \leq 47;&
\end{array}$$
$$
\begin{array}{lcc}
   & T'_{2i}:(42+2i,47+2i,44+2i,52+2i,37+2i)(66+2i,1+2i,52+2i,44+2i,88+2i) &  \\
  T_{2i} &  &  \\
   & T''_{2i}:(42+2i,47+2i,52+2i,44+2i,37+2i)(66+2i,1+2i,44+2i,52+2i,88+2i) & \\
   &  1 \leq i \leq 47;&
\end{array}$$
$$
\begin{array}{lcc}
   & T'_{3i}:(41+2i,66+2i,45+2i,32+2i,91+2i)(2+2i,2i,66+2i,41+2i,48+2i) &  \\
  T_{3i} &  &  \\
   & T''_{3i}:(66+2i,41+2i,45+2i,32+2i,91+2i)(2+2i,2i,41+2i,66+2i,48+2i) & \\
   &  1 \leq i \leq 47;&
\end{array}$$
$$
\begin{array}{lcc}
   &T'_{4i}: (3+2i,72+2i,40+2i,52+2i,9+2i)(19+2i,52+2i,40+2i,2i,90+2i) &  \\
  T_{4i} &  &  \\
   &T''_{4i}: (3+2i,72+2i,52+2i,40+2i,9+2i)(19+2i,40+2i,52+2i,2i,90+2i) & \\
   &  1 \leq i \leq 47;&
\end{array}$$
$$
\begin{array}{lcc}
   & T'_{5i}:(20+2i,37+2i,48+2i,54+2i,21+2i) (1+2i,83+2i,48+2i,37+2i,66+2i)&  \\
  T_{5i} &  &  \\
   & T''_{5i}:(20+2i,48+2i,37+2i,54+2i,21+2i) (1+2i,83+2i,37+2i,48+2i,66+2i) & \\
   &  1 \leq i \leq 47;&
\end{array}$$
$$
\begin{array}{lcc}
   &R'_{1i}: (61+2i,81+2i,55+2i,19+2i,83+2i)(48+2i,57+2i,55+2i,81+2i,1+2i) &  \\
   &  (81+2i,57+2i,5+2i,84+2i,17+2i)&  \\
R_{1i}   &  &  \\
   & R''_{1i}:(61+2i,55+2i,81+2i,19+2i,83+2i)(48+2i,81+2i,57+2i,55+2i,1+2i) & \\
   & (57+2i,81+2i,5+2i,84+2i,17+2i)&  \\
   &  1 \leq i \leq 47;&
\end{array}$$
$$
\begin{array}{lcc}
   & R'_{2i}:(9+2i,1+2i,10+2i,62+2i,81+2i)(18+2i,58+2i,29+2i,1+2i,9+2i)&  \\
   &(80+2i,6+2i,41+2i,10+2i,9+2i) &  \\
R_{2i}   &  &  \\
   & R''_{2i}:(1+2i,10+2i,9+2i,62+2i,81+2i)(18+2i,58+2i,29+2i,9+2i,1+2i) &  \\
   & (80+2i,6+2i,41+2i,9+2i,10+2i) &  \\
   &  1 \leq i \leq 47;&
\end{array}$$
$$
\begin{array}{lcc}
   &R'_{3i}: (i,53+2i,49+2i,68+2i,84+2i)(6+2i,36+2i,62+2i,49+2i,2i) &  \\
   &  (53+2i,2i,\infty,15+2i,14+2i)&  \\
 R_{3i}  &  &  \\
   &R''_{3i}: (53+2i,49+2i,2i,68+2i,84+2i)(6+2i,36+2i,62+2i,i,49+2i) & \\
   & (2i,53+2i,\infty,15+2i,14+2i) &  \\
   &  1 \leq i \leq 47.&
\end{array}$$
Therefore according  to the above  trades that there exist in design
$D$, we can obtain
the set of intersections of  $(95,5,1)DD$ and   can   deduce $I_D(95)=J_D(95)$.\\\\
${\bullet ~~~ I_D(115)=J_D(115)}$.\\
Let $D$ be a  $2$-$(115,5,1)$DD~ on the set  $~V = {Z_{ 104}} \cup
\{ {\infty _0},{\infty _1},...,{\infty _{10}}\} $,  that is
obtained by developing the blocks below.
 The design $D$ contain two groups block, the first group blocks is obtained by adding the value $+2$ $(\mathrm{mod ~104})$ to the below blocks and in this  blocks $\infty_0$ is changed $\infty_1$ when adding $+2$ $(\bmod 104)$.
 $$
 \begin{array}{cc}
   (9,61,56,4,\infty_0) & (61,9,36,88,\infty_1) \\
   (13,65,60,8,\infty_0) & (65,13,40,92,\infty_1) \\
   (17,69,64,12,\infty_0) & (69,17,44,96,\infty_1) \\
   (21,73,68,16,\infty_0) & (73,21,48,100,\infty_1) \\
   (25,77,72,20,\infty_0) & (77,25,52,0,\infty_1) \\
   (29,81,76,24,\infty_0) & (81,29,4,56,\infty_1) \\
   (33,85,80,28,\infty_0) & (85,33,8,60,\infty_1) \\
   (37,89,84,32,\infty_0) & (89,37,12,64,\infty_1) \\
   (41,93,88,36,\infty_0) & (93,41,16,68,\infty_1) \\
   (45,97,92,40,\infty_0) & (97,45,20,72,\infty_1) \\
   (49,101,96,44,\infty_0) & (101,49,24,76,\infty_1) \\
   (53,1,100,48,\infty_0) & (1,53,28,80,\infty_1) \\
   (57,5,0,52,\infty_0) & (5,57,32,84,\infty_1)
 \end{array}
 $$
 The second group blocks is obtained by developing the below base blocks $+2~(\mathrm{mod}~104)$, and in the first block  $\infty_0$ is replaced by $\infty_1$ when adding any value $\equiv 2 (\bmod~4)$. Finally
form a $2$-$(11,5,1)$DD, on the set $~\{{\infty _0},{\infty
_1},...,{\infty _{10}}\}$.
$$
\begin{array}{llllll}
    (0,3,67,25,17) & (6,103,59,0,33) & (0,59,49,96,28) & (0,58,88,65,84) & (0,21,\infty _3 ,93,92) & (75,0,\infty _5 ,103,62) \\
    (16,67,3,84,93) & (24,87,\infty_2,103,6) & (83,29,49,59,12) & (53,44,88,58,78) & (92,93,1,78,89) & (0,75,\infty_4,64,83) \\
    (\infty_0,47,2,4,41) & (4,2,26,49,44) & (23,32,\infty_6,4,47) & (5,3,47,51,44) & (3,5,87,58,41) & (55,2,\infty_7,58,87) \\
    (21,39,52,3,9) & (76,41,\infty_9,3,52) & (42,37,\infty_8,9,52) & (0,4,50,12,82) & (100,44,12,85,50) & (29,94,\infty_{10},50,85)
  \end{array}
$$
 This $2$-$(115, 5, 1)$DD contains $338$  disjoint directed  trades of volume $2$ , $208$  disjoint directed trades of volume
$3$.
\vspace{.1cm}

$
\begin{array}{rlc}
   {} & {S'_{1i} :} & {(2i,3+2i,67+2i,25+2i,17+2i)(16+2i,67+2i,3+2i,84+2i,93+2i)}  \\
   {S_{1i} } & {} & {}  \\
   {} & {S''_{1i} :} & {(2i,67+2i,3+2i,25+2i,17+2i)(16+2i,3+2i,67+2i,84+2i,93+2i)}  \\
   {} & {} & {0 \le i \le 51};
\end{array}$
\vspace{.1cm}

 $
\begin{array}{rlc}
   {} & {S'_{2i} :} & {(6+2i,103+2i,59+2i,2i,33+2i)(24+2i,87+2i,\infty_2,103+2i,6+2i)}  \\
   {S_{2i} } & {} & {}  \\
   {} & {S''_{2i} :} & {(103+2i,6+2i,59+2i,2i,33+2i)(24+2i,87+2i,\infty_2,6+2i,103+2i)}   \\
   {} & {} & {0 \le i \le 51};
\end{array}$
\vspace{.1cm}

 $
\begin{array}{rlc}
   {} & {S'_{3i} :} & {(2i,59+2i,49+2i,96+2i,28+2i)(83+2i,29+2i,49+2i,59+2i,12+2i)}  \\
   {S_{3i} } & {} & {}  \\
   {} & {S''_{3i} :} & {(2i,49+2i,59+2i,96+2i,28+2i)(83+2i,29+2i,59+2i,49+2i,12+2i)}  \\
   {} & {} & {0 \le i \le 51};
\end{array}$
\vspace{.1cm}

 $
\begin{array}{rlc}
   {} & {S'_{4i} :} & {(2i,58+2i,88+2i,65+2i,84+2i)(53+2i,44+2i,88+2i,58+2i,78+2i)}  \\
   {S_{4i} } & {} & {}  \\
   {} & {S''_{4i} :} & {(2i,88+2i,58+2i,65+2i,84+2i)(53+2i,44+2i,58+2i,88+2i,78+2i)}   \\
   {} & {} & {0 \le i \le 51};
\end{array}$
\vspace{.1cm}

$
\begin{array}{rlc}
   {} & {S'_{5i} :} & {(2i,21+2i,\infty _3 ,93+2i,92+2i)(92+2i,93+2i,1+2i,78+2i,89+2i)}  \\
   {S_{5i} } & {} & {}  \\
   {} & {S''_{5i} :} & {(2i,21+2i,\infty _3 ,92+2i,93+2i)(93+2i,92+2i,1+2i,78+2i,89+2i)}  \\
   {} & {} & {0 \le i \le 51};
\end{array}$
\vspace{.1cm}

$
\begin{array}{rlc}
   {} & {S'_{6i} :} & {(75+2i,2i,\infty _5 ,103+2i,62+2i)(2i,75+2i,\infty_4,64+2i,83+2i)}  \\
   {S_{6i} } & {} & {}  \\
   {} & {S''_{6i} :} & {(2i,75+2i,\infty _5 ,103+2i,62+2i)(75+2i,2i,\infty_4,64+2i,83+2i)}  \\
   {} & {} & {0 \le i \le 51};
\end{array}$
\vspace{.1cm}

$
\begin{array}{rlc}
   {} & {R'_{1i} :} & {(\infty_0,47+2i,2+2i,4+2i,41+2i)(4+2i,2+2i,26+2i,49+2i,44+2i)}  \\
   {} & {} & {(23+2i,32+2i,\infty_6,4+2i,47+2i)}  \\
   {R_{1i} } & {} & {}  \\
   {} & {R''_{1i} :} & {(\infty_0,4+2i,47+2i,2+2i,41+2i)(2+2i,4+2i,26+2i,49+2i,44+2i)}  \\
   {} & {} & {(23+2i,32+2i,\infty_6,47+2i,4+2i)}  \\
   {} & {} & {0 \le i \le 51};
\end{array}$
\vspace{.1cm}

$
\begin{array}{rlc}
   {} & {R'_{2i} :} & {(5+2i,3+2i,47+2i,51+2i,44+2i)(3+2i,5+2i,87+2i,58+2i,41+2i)}  \\
   {} & {} & {(55+2i,2+2i,\infty_7,58+2i, 87+2i)}  \\
   {R_{2i} } & {} & {}  \\
   {} & {R''_{2i} :} & {(3+2i,5+2i,47+2i,51+2i,44+2i)(5+2i,3+2i,58+2i,87+2i,41+2i)}   \\
   {} & {} & {(55+2i,2+2i,\infty_7,87+2i,58+2i)}  \\
   {} & {} & {0 \le i \le 51};
\end{array}$
\vspace{.1cm}

$
\begin{array}{rlc}
   {} & {R'_{3i} :} & {(21+2i,39+2i,52+2i,3+2i,9+2i)  (76+2i,41+2i,\infty_9,3+2i,52+2i)  }  \\
   {} & {} & {(42+2i,37+2i,\infty_8,9+2i,52+2i)}  \\
   {R_{3i} } & {} & {}  \\
   {} & {R''_{3i} :} & {(21+2i,39+2i,3+2i,9+2i,52+2i)  (76+2i,41+2i,\infty_9,52+2i,3+2i)  }   \\
   {} & {} & {(42+2i,37+2i,\infty_8,52+2i,9+2i)}  \\
   {} & {} & {0 \le i \le 51};
\end{array}$
\vspace{.1cm}

$
\begin{array}{rlc}
   {} & {R'_{4i}:} & {(2i,4+2i,50+2i,12+2i,82+2i) (100+2i,44+2i,12+2i,85+2i,50+2i)}  \\
   {} & {} & {(29+2i,94+2i,\infty_{10},50+2i,85+2i)}  \\
   R_{4i} & {} & {}  \\
   {} & {R''_{4i}:} & {(2i,4+2i,12+2i,50+2i,82+2i) (100+2i,44+2i,50+2i,12+2i,85+2i)}  \\
   {} & {} & {(29+2i,94+2i,\infty_{10},85+2i,50+2i)}  \\
   {} & {} & {0 \le i \le 51}.
\end{array}$
\vspace{.1cm}

Therefore by help of the above directed trades that there exist in design $D$ we can
obtain
the set of intersections $(115,5,1)DD$ and   can   deduce $I_D(115)=J_D(115)$.\\
\\
${\bullet ~~~I_D(v)=J_D(v)}$, for $v=135,215,335$.\\
According to  Proposition \ref{12}  for  each of values
$v=135,215,335$ there exists a $\{5,6\}-GDD$ of type $5^{\frac{{v
- 5}}{{10}}} 2$. Now by  Lemma \ref{4} and the fact that $I_G
(10) = \{0,1,...,6,8\}$, $I_G (12) = \{0,1,...,10,12\}$,
$I_D(5)=J_D(5)$ and $\{ 0,1,2,3,11\}  \subseteq I_D (11)$, we can
deduce
$I_D(v)=J_D(v)$.\\\\
${ \bullet ~~~I_D(v)=J_D(v)}$, for $v=155,235$.\\
According to  Proposition \ref{12} for each of  values
$v=155,235$ there exists a $\{5,6\}-GDD$ of type $5^{\frac{{v -
25}}{{10}}} 12$. Now by  Lemma \ref{4} and the fact that $I_G
(10) = \{0,1,...,6,8\}$, $I_G (12) = \{0,1,...,10,12\}$,
$I_D(25)=J_D(25)$ and $\{ 0,1,2,3,11\} \subseteq I_D (11)$, we can
deduce
$I_D(v)=J_D(v)$ .\\
\end{document}